\documentclass[11pt,english]{amsart}
\usepackage{ae,aecompl}

\usepackage[T1]{fontenc}
\usepackage[latin9]{inputenc}
\usepackage{geometry}
\geometry{verbose,tmargin=2cm,bmargin=2cm,lmargin=2.5cm,rmargin=2.5cm,headsep=0.8cm}
\usepackage{color}
\usepackage{babel}
\usepackage{enumitem}
\usepackage{amstext}
\usepackage{amsthm}
\usepackage{amssymb}
\usepackage[unicode=true,pdfusetitle,
 bookmarks=true,bookmarksnumbered=false,bookmarksopen=false,
 breaklinks=false,pdfborder={0 0 1},backref=false,colorlinks=true]
 {hyperref}

\makeatletter
\numberwithin{equation}{section}
\numberwithin{figure}{section}
\theoremstyle{plain}
\newtheorem{thmB}{\protect\theorembname}
\theoremstyle{plain}
\newtheorem{thm}{\protect\theoremname}[section]
\theoremstyle{plain}
\newtheorem{lem}[thm]{\protect\lemmaname}
\theoremstyle{remark}
\newtheorem{rem}[thm]{\protect\remarkname}

\usepackage{etex}
\usepackage{amsfonts}
\usepackage{amsthm}
\usepackage{array}
\usepackage{amsmath}
\usepackage{float}

\date{}
\makeatletter
\makeatletter 
\def\clearpage{%
  \ifvmode 
    \ifnum \@dbltopnum =\m@ne 
      \ifdim \pagetotal <\topskip 
        \hbox{} 
      \fi 
    \fi 
  \fi 
  \newpage 
  \thispagestyle{empty} 
  \write\m@ne{} 
  \vbox{} 
  \penalty -\@Mi 
} 
\makeatother

\newcommand{\Rolosaysnopage}[1]{{ }}

\usepackage{pgf,tikz}
\usetikzlibrary{arrows}

\usepackage{enumitem}
\setenumerate[1]{label=\arabic*. }

\makeatother

\providecommand{\lemmaname}{Lemma}
\providecommand{\remarkname}{Remark}
\providecommand{\theorembname}{Theorem}
\providecommand{\theoremname}{Theorem}

\begin{document}
\global\long\def\essssup{\text{ess sup}}

\title{A sparse approach to mixed weak type inequalities}

\author{Marcela Caldarelli}

\address{(Marcela Caldarelli) Departamento de Matemática, Universidad Nacional
del Sur. Alem 1253, Bahía Blanca, Argentina.}

\email{marcela.caldarelli@uns.edu.ar}

\author{Israel P. Rivera-Ríos}

\address{(Israel P. Rivera-Ríos) CONICET - INMABB, Departamento de Matemática,
Universidad Nacional del Sur. Alem 1253, Bahía Blanca, Argentina.}

\email{israel.rivera@uns.edu.ar}

\thanks{The second author is supported by CONICET PIP 11220130100329CO}
\begin{abstract}
In this paper we provide some quantitative mixed-type estimates assuming
conditions that imply that $uv\in A_{\infty}$ for Calderón-Zygmund
operators, rough singular integrals and commutators. The main novelty
of this paper lies in the fact that we rely upon sparse domination
results, pushing an approach to endpoint estimates that was introduced
in \cite{DSLR} and extended in \cite{LORRArxiv} and \cite{LiPRR}.
\end{abstract}

\maketitle

\section{Introduction and Main Results}

In \cite{MW}, Muckenhoupt and Wheeden introduced a new type of weak
type inequality, that we call mixed type inequality, that consists
in considering a perturbation of the Hardy-Littlewood maximal operator
with an $A_{p}$ weight. Their result was the following
\begin{thmB}
Let $w\in A_{1}$ then 
\[
\left|\left\{ x\in\mathbb{R}\,:\,w(x)Mf(x)>t\right\} \right|\leq\frac{1}{t}\int_{\mathbb{R}}|f|w(x)dx.
\]
\end{thmB}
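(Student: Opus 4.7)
The plan is to prove Theorem~A by a one-dimensional Vitali covering argument that exploits the $A_{1}$ hypothesis on $w$. After standard reductions (take $f\geq 0$, $f\in L^{1}(w)$), fix $t>0$ and set
\[
E_{t}=\{x\in\mathbb{R}:\ w(x)Mf(x)>t\}.
\]
For every $x\in E_{t}$, the definition of the Hardy--Littlewood maximal operator produces an interval $I_{x}\ni x$ with $\frac{1}{|I_{x}|}\int_{I_{x}}f\,dy>\frac{t}{w(x)}$, or equivalently
\[
|I_{x}|<\frac{w(x)}{t}\int_{I_{x}}f\,dy.
\]

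The classical Vitali covering lemma on $\mathbb{R}$, applied to $\{I_{x}\}_{x\in E_{t}}$, yields a pairwise disjoint subfamily $\{I_{j}\}$ with marked points $x_{j}\in I_{j}$ such that $E_{t}\subseteq\bigcup_{j}3I_{j}$. Hence
\[
|E_{t}|\leq 3\sum_{j}|I_{j}|<\frac{3}{t}\sum_{j}w(x_{j})\int_{I_{j}}f\,dy.
\]
Using the pairwise disjointness of the $I_{j}$'s, the proof reduces to the pointwise-to-average exchange
\[
w(x_{j})\int_{I_{j}}f\,dy\leq C\int_{I_{j}}f\,w\,dy,
\]
after which summation gives $|E_{t}|\leq\frac{3C}{t}\int f\,w\,dx$.

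The main obstacle is precisely this exchange. The $A_{1}$ condition provides $\langle w\rangle_{I}\leq[w]_{A_{1}}\operatorname*{ess\,inf}_{I}w$, i.e.\ it controls averages from above by pointwise infima---the \emph{wrong} direction for bounding $w(x_{j})$ by $\langle w\rangle_{I_{j}}$, which is what the argument calls for. To circumvent this, I would refine the construction: choose $I_{x}$ to be (essentially) \emph{maximal} among intervals fulfilling the averaging condition and restrict attention to Lebesgue points of $w$. The rising-sun structure of $\{Mf>\lambda\}$ as a disjoint union of open intervals, on whose endpoints the averages of $f$ are exactly $\lambda$, then allows one to identify the correct scale of $I_{x}$ and, via $A_{1}$, to compare $w(x)$ to $\langle w\rangle_{I_{x}}$ up to the factor $[w]_{A_{1}}$. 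Once this comparison is in place, disjointness closes the estimate and yields $|E_{t}|\leq\frac{C[w]_{A_{1}}}{t}\int f w\,dx$. Producing the absolute constant $1$ displayed in the statement---rather than one depending on $[w]_{A_{1}}$---is the genuinely delicate point, and I would expect it to follow from a Calder\'on--Zygmund decomposition of $fw$ at level $t$ whose stopping intervals are chosen to respect both Lebesgue and $w$-measure simultaneously.
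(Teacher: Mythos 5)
Your plan correctly isolates the crux, but the fix you sketch does not repair it, so there is a genuine gap at the exchange
\[
w(x_{j})\int_{I_{j}}f\,dy\leq C\int_{I_{j}}f\,w\,dy .
\]
This inequality is false for the covering intervals, and no refinement by maximality, Lebesgue points, or the rising-sun structure of superlevel sets of $Mf$ can rescue it: the obstruction is a spike of $w$ at $x_{j}$, which is invisible to any selection of $I_{x}$ driven by averages of $f$ alone. Concretely, take $w(x)=|x|^{-1/2}\in A_{1}(\mathbb{R})$ and $f=\chi_{[1,2]}$. For $x=\varepsilon$ near $0$, every interval realizing a fixed fraction of $Mf(\varepsilon)$ is essentially $[\varepsilon,2]$, on which $\int f\,w\,dy\leq 2(\sqrt{2}-1)$ while $w(\varepsilon)\int f\,dy\gtrsim\varepsilon^{-1/2}\to\infty$; the $A_{1}$ condition only gives $\langle w\rangle_{I}\leq[w]_{A_{1}}\operatorname{ess\,inf}_{I}w$, never a pointwise upper bound $w(x)\lesssim\langle w\rangle_{I}$. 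The rising-sun decomposition is moreover unavailable here because the level at which $Mf$ is tested is $t/w(x)$, which varies with $x$, so there is no single superlevel set to decompose. (The theorem survives because the set of such bad points has small Lebesgue measure precisely where $w$ is large, but your argument never exploits this.) A further point: the constant $1$ displayed in the statement cannot be produced by any argument --- already for $w\equiv 1$ the inequality would assert the weak $(1,1)$ bound for $M$ with constant $1$, which fails; the correct statement carries a constant depending on $[w]_{A_{1}}$, and the closing appeal to a Calder\'on--Zygmund decomposition of $fw$ does not yield one.

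For the record, the paper offers no proof of this theorem; it is quoted from Muckenhoupt and Wheeden \cite{MW}. The known arguments proceed along genuinely different lines: either one stratifies $\mathbb{R}$ according to the size of $w$ and uses that $w$ is essentially constant (up to the factor $[w]_{A_{1}}$) on each stratum, or, in the language of this paper, one observes that the statement is exactly Sawyer's inequality \eqref{eq:SMax} with $u=w\in A_{1}$ and $v=w^{-1}$, which lies in $A_{\infty}$ because $w\in A_{1}$ forces $w^{-s}\in A_{\infty}$ for every $s>0$. Either route replaces the pointwise-to-average exchange you need by a global mechanism that accounts for the smallness, in Lebesgue measure, of the region where $w$ is large.
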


Although this kind of estimate may seem not very different to the
standard one, the perturbation caused by having the weight inside
the level set makes it way harder to be settled, in contrast with
analogous case of strong type estimates. Furthermore, $w\in A_{1}$
is no longer a necessary condition for this endpoint estimate to hold
(see \cite[Section 5]{MW}).

Later on, Sawyer \cite{S}, motivated by the possibility of providing
a new proof for the Muckenhoupt's theorem, obtained the following
result.
\begin{thmB}
Let $u,v\in A_{1}$ then 
\begin{equation}
uv\left(\left\{ x\in\mathbb{R}\,:\,\frac{M(fv)(x)}{v(x)}>t\right\} \right)\leq\frac{1}{t}\int_{\mathbb{R}}|f|u(x)v(x)dx.\label{eq:SMax}
\end{equation}
\end{thmB}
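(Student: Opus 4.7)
The plan is to combine a Calder\'on--Zygmund stopping argument on the $v$-weighted maximal function with the $A_{\infty}$ correlation inequality that the hypothesis $u,v\in A_{1}$ enforces on the product $uv$. First, using that $v\in A_{1}$ gives $\langle v\rangle_{Q}\le[v]_{A_{1}}v(x)$ for a.e.\ $x\in Q$, any cube witnessing $\langle fv\rangle_{Q}>tv(x)$ also satisfies $v(Q)^{-1}\int_{Q}fv>t/[v]_{A_{1}}$, so
\[
\Omega_{t}:=\Big\{\frac{M(fv)}{v}>t\Big\}\subseteq\Big\{M^{v}f>\frac{t}{[v]_{A_{1}}}\Big\},
\]
where $M^{v}f(x):=\sup_{Q\ni x}v(Q)^{-1}\int_{Q}fv$. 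After the standard $3^{n}$ shifted-dyadic reduction, the level set $\{M^{v}_{\mathcal{D}}f>s\}$ with $s:=t/[v]_{A_{1}}$ is a disjoint union of maximal dyadic cubes $\{Q_{j}\}$ with $v(Q_{j})<s^{-1}\int_{Q_{j}}fv$.

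Next I would invoke the correlation inequality. From $u,v\in A_{1}$ one obtains $uv\in A_{\infty}$ (the condition emphasised in the abstract), and the reverse Jensen characterisation of $A_{\infty}$ combined with ordinary Jensen yields
\[
\langle uv\rangle_{Q}\le C\exp\langle\log u\rangle_{Q}\exp\langle\log v\rangle_{Q}\le C\langle u\rangle_{Q}\langle v\rangle_{Q}
\]
for every cube $Q$. Applied to each $Q_{j}$ and combined with the stopping condition:
\[
uv(Q_{j})\le C\langle u\rangle_{Q_{j}}v(Q_{j})\le\frac{C}{s}\langle u\rangle_{Q_{j}}\int_{Q_{j}}fv.
\]

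Finally, since $u\in A_{1}$ gives $\langle u\rangle_{Q_{j}}\le[u]_{A_{1}}u(y)$ a.e.\ $y\in Q_{j}$, multiplying by $fv\ge 0$ and integrating over $Q_{j}$ yields $\langle u\rangle_{Q_{j}}\int_{Q_{j}}fv\le[u]_{A_{1}}\int_{Q_{j}}fuv$. Summing over the disjoint family,
\[
uv(\Omega_{t})\le\sum_{j}uv(Q_{j})\le\frac{C[u]_{A_{1}}[v]_{A_{1}}}{t}\int f\,uv.
\]
The principal conceptual step is the correlation inequality: stopping/sparse decomposition alone cannot absorb the asymmetric weight in $M(fv)/v$, and it is precisely the $A_{\infty}$ property of $uv$ that allows the $v$-weighted average to be converted into the Lebesgue-averaged $\langle u\rangle_{Q}$ which $A_{1}$ of $u$ then absorbs. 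This template --- sparse/stopping reduction, $A_{\infty}$-correlation bridge, $A_{1}$ absorption --- is exactly what the paper will push through for Calder\'on--Zygmund operators, rough singular integrals, and their commutators.
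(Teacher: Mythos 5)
Your argument breaks down at the ``correlation inequality'' step, which is the load-bearing step of the whole proof. It is \emph{not} true that $u,v\in A_{1}$ implies $uv\in A_{\infty}$, nor that $\langle uv\rangle_{Q}\leq C\langle u\rangle_{Q}\langle v\rangle_{Q}$ with $C$ controlled by $[u]_{A_{1}}$ and $[v]_{A_{1}}$. Take $u(x)=v(x)=|x|^{-3/4}$ on $\mathbb{R}$: both are $A_{1}$ weights, but $uv=|x|^{-3/2}$ is not even locally integrable, so $\langle uv\rangle_{Q}=\infty$ for every interval containing the origin while $\langle u\rangle_{Q}\langle v\rangle_{Q}<\infty$. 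Even when $uv$ is locally integrable (say $u=v=|x|^{-a}$ with $a<1/2$), the best constant in $\langle uv\rangle_{Q}\leq C\langle u\rangle_{Q}\langle v\rangle_{Q}$ behaves like $(1-2a)^{-1}$, which blows up as $a\uparrow 1/2$ while $[u]_{A_{1}}[v]_{A_{1}}$ stays bounded, so no estimate of the form $C([u]_{A_{1}},[v]_{A_{1}})\,t^{-1}\int|f|uv$ can emerge from this route. You have misread the abstract: the conditions that force $uv\in A_{\infty}$ are those of Theorems 1 and 2, namely $u\in A_{1}$, $v\in A_{p}(u)$, or $v\in A_{1}$, $u\in A_{1}(v)$ --- not the hypothesis $u,v\in A_{1}$ of Sawyer's theorem. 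The paper explicitly presents the $A_{\infty}$ property of $uv$ as the \emph{advantage} of those stronger hypotheses; its failure under $u,v\in A_{1}$ is precisely what makes Sawyer's theorem difficult.

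For the record, the paper offers no proof of this statement; it is quoted as background from Sawyer's work, and the known arguments (Sawyer's original one, and the later treatments of Cruz-Uribe--Martell--P\'erez and Li--Ombrosi--P\'erez) are substantially more delicate than a single stopping-time decomposition, exactly because they cannot pass through any $A_{\infty}$ property of the product. Your outer reductions are fine: the inclusion $\Omega_{t}\subseteq\{M^{v}f>t/[v]_{A_{1}}\}$ is correct (and the $3^{n}$-lattice step is legitimate since $v\in A_{1}$ is doubling), and the final absorption $\langle u\rangle_{Q_{j}}\int_{Q_{j}}fv\leq[u]_{A_{1}}\int_{Q_{j}}fuv$ is correct. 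What is missing is the genuine content: a way to bound $uv(Q_{j})$ by $\langle u\rangle_{Q_{j}}v(Q_{j})$, and there is no $A_{\infty}$ shortcut available for that.
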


Sawyer also conjectured that \eqref{eq:SMax} should hold as well
for the Hilbert transform. Cruz-Uribe, Martell and Pérez \cite{CUMP}
generalized \eqref{eq:SMax} to higher dimensions and actually proved
that Sawyer's conjecture holds for Calderón-Zygmund operators via
the following extrapolation argument.
\begin{thmB}
Assume that for every $w\in A_{\infty}$ and some $0<p<\infty$, 
\[
\|Tf\|_{L^{p}(w)}\leq c_{w}\|Gf\|_{L^{p}(w)}.
\]
Then for every $u\in A_{1}$ and every $v\in A_{\infty}$
\[
\|Tf\|_{L^{1,\infty}(uv)}\leq\|Gf\|_{L^{1,\infty}(uv)}.
\]
\end{thmB}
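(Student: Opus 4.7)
The plan is to combine the $A_\infty$-extrapolation theorem (Cruz-Uribe--Martell--P\'erez) with a Kolmogorov duality for $L^{1,\infty}(uv)$ and a Rubio de Francia iteration adapted to the factorization $uv$ of the mixed weight.

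First I would upgrade the hypothesis via $A_\infty$-extrapolation to obtain $\|Tf\|_{L^q(w)}\leq c_w\|Gf\|_{L^q(w)}$ for every $0<q<\infty$ and every $w\in A_\infty$, fixing at the end a small $q\in(0,1)$ so that Kolmogorov's inequality is available. In this regime, proving
$$\|Tf\|_{L^{1,\infty}(uv)} \leq C\,\|Gf\|_{L^{1,\infty}(uv)}$$
is equivalent, up to constants depending only on $q$, to showing for every measurable $F$ with $uv(F)<\infty$ that
$$\left(\int_F |Tf|^q\,uv\,dx\right)^{1/q} \leq C\,uv(F)^{1/q-1}\,\|Gf\|_{L^{1,\infty}(uv)}.$$

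Next, for such an $F$ I would majorize $\chi_F u$ by an $A_\infty$ weight via Rubio de Francia. Since $v\in A_\infty$, choose $r>1$ with $M\colon L^r(v)\to L^r(v)$ bounded; then the iteration
$$Rh\;=\;\sum_{k\geq 0}\frac{M^kh}{2^k\|M\|_{L^r(v)}^k}$$
applied to $h=\chi_F u$ produces $W:=Rh$ with $W\geq\chi_F u$ pointwise, $W\in A_1$ with $[W]_{A_1}$ depending only on $[v]_{A_\infty}$, and $\|W\|_{L^r(v)}\leq 2\|\chi_F u\|_{L^r(v)}$. The $A_1$-condition on $u$ further gives $M^k(\chi_F u)\leq [u]_{A_1}^k\, u$ pointwise, so after calibrating the normalizing constant to dominate $[u]_{A_1}$ one also has $W\lesssim u$ pointwise.

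The heart of the argument is then to insert $W$ into $\int_F |Tf|^q\,uv\,dx\leq \int |Tf|^q\,Wv\,dx$, apply the hypothesis to pass from $Tf$ to $Gf$, and finally reconstruct the weak-type bound for $Gf$ in $L^{1,\infty}(uv)$. Since $Wv$ is merely a product of an $A_1$ and an $A_\infty$ weight, it need not be in $A_\infty$, so the hypothesis cannot be applied to $Wv$ directly. Instead I would H\"older-split with exponents $r,r'$ in $v\,dx$, invoking the hypothesis at the step where the weight is purely $W$ (which is in $A_\infty$) at the cost of a factor $\|W\|_{L^r(v)}\lesssim \|\chi_F u\|_{L^r(v)}$ from the other H\"older factor. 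The resulting $Gf$-side is then controlled by Kolmogorov in the $uv$-measure using $W\lesssim u$.

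The main obstacle is balancing the exponents and tracking constants: the choice of $r$ (determined by $[v]_{A_\infty}$), the Kolmogorov exponent $q$, and the power of $uv(F)$ coming from $\|\chi_F u\|_{L^r(v)}$ must align so that the final right-hand side is exactly $uv(F)^{1/q-1}\|Gf\|_{L^{1,\infty}(uv)}$ up to constants depending only on $[u]_{A_1}$, $[v]_{A_\infty}$, and the constant in the hypothesis. The fact that $Wv$ is not automatically in $A_\infty$ is precisely what forces the H\"older detour through $L^r(v)$, and it is in this detour that the assumption $v\in A_\infty$ is fully used.
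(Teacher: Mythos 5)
First, a point of context: this statement is the extrapolation theorem of Cruz-Uribe, Martell and P\'erez, which the paper quotes from \cite{CUMP} as background and does not prove, so your proposal has to be judged on its own. Your preparatory steps are sound and standard: the self-improvement of the hypothesis to all exponents by $A_\infty$-extrapolation, the Kolmogorov-type reformulation over sets $F$ with $uv(F)<\infty$, and the Rubio de Francia construction of $W=R(\chi_F u)$ satisfying $\chi_F u\le W\lesssim u$, $W\in A_1$ and $\|W\|_{L^r(v)}\lesssim\|\chi_F u\|_{L^r(v)}$ are all correct.

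The genuine gap is in the step you yourself call the heart of the argument. Since $Wv$ need not be in $A_\infty$ (this is the actual crux of the theorem), you propose a H\"older split with exponents $r,r'$ in $v\,dx$ so as to invoke the hypothesis ``where the weight is purely $W$''. But H\"older applied to $\int|Tf|^qWv$ in the measure $v\,dx$ places the weight $v$, not $W$, on the $Tf$-factor, giving $\bigl(\int|Tf|^{qr'}v\bigr)^{1/r'}\bigl(\int W^rv\bigr)^{1/r}$; and if you split instead so as to keep $W$ with $Tf$, the hypothesis yields $\bigl(\int|Gf|^{q\sigma}W\bigr)^{1/\sigma}\lesssim\bigl(\int|Gf|^{q\sigma}u\bigr)^{1/\sigma}$. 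Either way the $Gf$-side becomes a \emph{global} integral of a power of $Gf$ against $v$ alone or $u$ alone; Kolmogorov in the $uv$-measure requires the integrand $|Gf|^{q}uv$ restricted to a set of finite $uv$-measure, and nothing in your outline reintroduces the product weight or a localization. The bound $W\lesssim u$ does not help: the mass of $W$ off $F$, which decays only like $M(\chi_Fu)$, is precisely what must be controlled and is not. Already for $u=v=1$ the proposed chain would require $\bigl(\int|Gf|^{qr'}\bigr)^{1/r'}|F|^{1/r}\lesssim|F|^{1-q}\|Gf\|_{L^{1,\infty}}^{q}$, whose left factor is generically infinite for $Gf\in L^{1,\infty}$ and whose powers of $|F|$ only match at the forbidden endpoint $qr'=1$. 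The natural repair, and the mechanism behind \cite{CUMP} and the classes $A_p(u)$ recalled in Section \ref{sec:ApWeightsOrliczmaximal}, is to run the Rubio de Francia iteration with a maximal operator adapted to $v$ (such as $M_v$), so that the output satisfies $W\in A_1(v)$ with controlled constant; then $Wv\in A_\infty$ and the hypothesis applies directly to the weight $Wv$, with the $L^s(v)$-control of $W$ interacting correctly with the $uv$-measure. As written, your argument does not close.
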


The conditions on the weights in that extrapolation result lead them
to conjecture that \eqref{eq:SMax}, and consequently the corresponding
estimate for Calderón-Zygmund operators should hold as well with $u\in A_{1}$
and $v\in A_{\infty}$. That conjecture was positively answered recently
in \cite{LiOP} where several quantitative estimates were provided
as well. At this point we would like to mention, as well, a recent
generalization provided for Orlicz maximal operators in \cite{B}.

In \cite{CUMP}, besides the aforementioned results, it was shown
that \eqref{eq:SMax} holds if $u\in A_{1}$ and $v\in A_{\infty}(u)$
(see Section \ref{sec:ApWeightsOrliczmaximal} for the precise definition
of $A_{p}(u)$). The advantage of that condition is that the product
$uv$ is an $A_{\infty}$ weight. Over the past few years, there have
been new contributions under those assumptions such as \cite{BCP2}
for the case of fractional integrals and related operators, \cite{OP,OPR}
for related quantitative estimates and \cite{LiOPi} for multilinear
extensions. 

The case of commutators of Calderón-Zygmund operators was settled
in \cite{BCP}. Recall that given $T$ a Calderón-Zygmund operator,
$b\in Osc_{\exp L^{r}}\subset BMO$ (see Section \ref{sec:ApWeightsOrliczmaximal}
for the precise definition) and a positive integer $m$, we define
the higher order commutator $T_{b}^{m}f$ by
\[
T_{b}^{m}f(x)=b(x)T_{b}^{m-1}f(x)-T_{b}^{m-1}(bf)(x)
\]
where $T_{b}^{1}f(x)=b(x)Tf(x)-T(bf)(x)$.

Now we turn to our contribution. Our approach exploits sparse domination
and ideas from \cite{LiPRR} that can be traced back to \cite{DSLR}.
In the case of commutators our approach is inspired by \cite{LORRArxiv}
as well. The main novelty of our proofs is precisely that, in contrast
with the techniques used up until now to deal with this kind of questions,
we heavily rely upon sparse domination. Our first result is the following.
\begin{thm}
\label{Thm:1}Let $u\in A_{1}$ and $v\in A_{p}(u)$ for some $1<p<\infty$.
\begin{enumerate}
\item If $T$ is a Calderón-Zygmund operator, 
\[
\left\Vert \frac{T(fv)(x)}{v(x)}\right\Vert _{L^{1,\infty}(uv)}\leq c_{n,p}[uv]_{A_{\infty}}[u]_{A_{1}}\log\left(e+[uv]_{A_{\infty}}[u]_{A_{1}}[v]_{A_{p}(u)}\right)\|f\|_{L^{1}(uv)}
\]
and if $m$ is a positive integer, $r>1$ and $b\in Osc_{\exp L^{r}}$
then 
\begin{equation}
uv\left(\left\{ x\in\mathbb{R}^{n}\,:\,\left|\frac{T_{b}^{m}(fv)}{v}\right|>t\right\} \right)\leq c_{n,p}\Gamma_{u,v}^{m}\int_{\mathbb{R}^{n}}\Phi_{\frac{m}{r}}\left(\frac{|f|\|b\|_{Osc_{\exp L^{r}}}^{m}}{t}\right)uv\label{eq:DepComm}
\end{equation}
where 
\[
\Gamma_{u,v}^{m}=\sum_{h=0}^{m}[u]_{A_{1}}[uv]_{A_{\infty}}^{1+\frac{h}{r}}[u]_{A_{\infty}}^{\frac{m-h}{r}}\log\left(e+[u]_{A_{1}}[uv]_{A_{\infty}}^{1+\frac{h}{r}}[u]_{A_{\infty}}^{\frac{m-h}{r}}[v]_{A_{p}(u)}\right)^{1+\frac{h}{r}}
\]
and $\Phi_{\rho}(t)=t\left(1+\log^{+}(t)\right)^{\rho}.$
\item If $\Omega\in L^{\infty}(\mathbb{S}^{n-1})$ then
\[
\left\Vert \frac{T_{\Omega}(fv)(x)}{v(x)}\right\Vert _{L^{1,\infty}(uv)}\leq c_{n,p}[uv]_{A_{\infty}}[u]_{A_{1}}[u]_{A_{\infty}}\log\left(e+[uv]_{A_{\infty}}[u]_{A_{1}}[u]_{A_{\infty}}[v]_{A_{p}(u)}\right)\|f\|_{L^{1}(uv)}.
\]
\[
\]
\end{enumerate}
\end{thm}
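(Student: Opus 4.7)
My plan is to treat the three operators uniformly by first invoking a known sparse bound and then proving the mixed weak type estimate directly for the sparse form. For a Calder\'on--Zygmund operator $T$ I use Lerner's pointwise domination $|Tf(x)|\le c_n C_T\sum_{Q\in\mathcal{S}}\langle|f|\rangle_Q\mathbf{1}_Q(x)$; for the higher order commutator I use the quantitative sparse bound
\[
|T_b^m f(x)|\le c_n C_T\sum_{h=0}^m\binom{m}{h}\sum_{Q\in\mathcal{S}}|b(x)-b_Q|^h\langle|b-b_Q|^{m-h}|f|\rangle_Q\mathbf{1}_Q(x);
\]
and for $T_\Omega$ with $\Omega\in L^\infty$ the $L^r$-averaged bilinear sparse form of Conde-Alonso-Culiuc-Di Plinio-Ou, in the sharp version due to Lerner. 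After these reductions the problem becomes that of bounding the mixed weak type norm of a suitable sparse operator applied to $fv$ and then divided by $v$.

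Following the template developed in \cite{DSLR,LORRArxiv,LiPRR}, I fix a level $t$ and perform a Calder\'on--Zygmund decomposition of $f$ with respect to the measure $u\,dx$ (which is doubling because $u\in A_1$) at a height proportional to $t$. This produces maximal disjoint cubes $\{P_j\}$ and a splitting $f=g+\sum_j b_j$ with $\|g\|_{L^\infty}\lesssim[u]_{A_1}t$ and $\supp b_j\subset P_j$. The good part is handled by an $L^2(uv)$-type bound for the sparse operator whose constant is controlled by $[uv]_{A_\infty}$; the fact that $u\in A_1$ together with $v\in A_p(u)$ forces $uv\in A_\infty$ with a quantitative bound is precisely what brings $[uv]_{A_\infty}$ into the estimates. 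For the bad part, the sparse family is split into cubes contained in some $P_j$, whose contribution is absorbed in $uv(\bigcup P_j)\lesssim\|f\|_{L^1(uv)}/t$ via the reverse H\"older inequality for $uv$, and cubes not contained in any $P_j$, on which the $u$-stopping condition allows one to bound $\langle|fv|\rangle_Q$ by a multiple of $t\cdot v$ at the cost of the announced logarithmic correction.

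The commutator case follows the same pattern, with the extra input that the generalised John-Nirenberg inequality for $b\in Osc_{\exp L^r}$ converts the averages $\langle|b-b_Q|^{m-h}|f|\rangle_Q$ into Orlicz averages; their interaction with the $u$-Calder\'on--Zygmund decomposition is what produces the Young function $\Phi_{m/r}$ in \eqref{eq:DepComm} and, after iterated reverse H\"older steps with respect to both $uv$ and $u$, the $h$-indexed summand $[uv]_{A_\infty}^{1+h/r}[u]_{A_\infty}^{(m-h)/r}$ inside $\Gamma_{u,v}^m$. For $T_\Omega$, the $L^r$-type sparse bound requires one additional reverse H\"older step, which accounts for the extra factor $[u]_{A_\infty}$ in part (2).

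The hardest step will be securing the logarithmic (rather than polynomial) dependence on the weight constants stated in Theorem \ref{Thm:1}. This forces the use at each reverse H\"older step of the sharp Hyt\"onen-P\'erez exponent $1+1/(c_n[w]_{A_\infty})$ and, in the commutator case, a delicate optimisation in the resulting parameters in order to reassemble the logarithmic factors that appear inside $\Gamma_{u,v}^m$; a careful bookkeeping of how the $u$- and the $uv$-reverse H\"older constants distribute over the index $h$ of the commutator expansion is what I expect to be the main technical obstacle.
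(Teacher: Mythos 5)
Your reduction to sparse forms matches the paper's first step, but from there your argument diverges in a way that leaves the core of the proof missing. The paper performs no Calder\'on--Zygmund decomposition of $f$. Instead it subtracts from the level set the set where a dyadic weighted maximal function ($M_{uv}^{\mathcal{D}}f$, or an $L(\log L)^{h/r}(uv)$ variant for commutators) exceeds $1/2$, calls the remainder $G$, tests the sparse form against $g=\chi_{G}$, and splits the sparse family into subfamilies $\mathcal{S}_{k,j}$ according to the dyadic sizes $\langle f\rangle_{Q}^{uv}\sim2^{-j}$ and $\langle g\rangle_{Q}^{u}\sim2^{-k}$. Each piece $s_{k,j}$ is then bounded in two competing ways: by $c_{n}[uv]_{A_{\infty}}2^{-k}$ via the bounded overlap of the sets $\tilde{E}_{Q}$ (Lemma \ref{Lem:DisEQ}), and by $c_{n,p}[uv]_{A_{\infty}}[v]_{A_{p}(u)}2^{-j}2^{k(p-1)}uv(G)$ via the Carleson-type Lemma \ref{Lem:UnQ}, the change-of-weight Lemma \ref{Lem:ContAvg} (the only place where $v\in A_{p}(u)$ enters), and the weak $(1,1)$ bound for $M_{uv}$ from Lemma \ref{Lem:MUnDyad}. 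Summing the minimum of the two bounds over $(k,j)$ via Lemma \ref{Lem:DoubleSum} produces simultaneously the logarithmic factor and the absorbable term $\tfrac{1}{2}uv(G)$.

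Two steps of your plan would fail as stated. First, handling the good part by an ``$L^{2}(uv)$-type bound'' for $T(gv)/v$ requires controlling $\int|T(gv)|^{2}uv^{-1}$ by a constant times $\int|g|^{2}uv$, a genuinely two-weight estimate that does not follow from $uv\in A_{\infty}$; this is precisely the obstruction that makes mixed weak-type inequalities hard and that the level-set/absorption scheme is designed to circumvent. Second, on cubes not contained in any stopping cube you propose to bound $\langle|fv|\rangle_{Q}$ by a multiple of $t\,v$, which would require $v$ to be pointwise comparable to its averages on $Q$; no such property is available here, since under the hypotheses $v$ alone need not even be doubling. Finally, the logarithmic dependence is not obtained by optimising reverse H\"older exponents as you anticipate: it is generated by Lemma \ref{Lem:DoubleSum} when summing the two competing bounds, and without the $(k,j)$ splitting and the absorption of $\tfrac{1}{2}uv(G)$ your argument has no mechanism to produce it. The reverse H\"older inequality enters only in the rough case, to pass from $\langle\chi_{G}u\rangle_{Q,s}$ to $\langle g\rangle_{Q,2s}^{u}\langle u\rangle_{Q,1}$ at the cost of the single extra factor $[u]_{A_{\infty}}$.
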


We would like to note that in the case $u=1$, in the case of Calderón-Zygmund
operators, the estimate above reduces to 
\[
\left\Vert \frac{T(fv)(x)}{v(x)}\right\Vert _{L^{1,\infty}(v)}\leq c_{n,p}[v]_{A_{\infty}}\log\left(e+[v]_{A_{p}}\right)\|f\|_{L^{1}(v)}\qquad p\geq1.
\]
That estimate improves the bound provided in \cite[Theorems 1.16 and 1.17]{OPR},
namely, 
\[
\left\Vert \frac{T(fv)(x)}{v(x)}\right\Vert _{L^{1,\infty}(v)}\leq c_{n,p}[v]_{A_{p}}\log\left(e+[v]_{A_{p}}\right)\|f\|_{L^{1}(v)}\qquad p\geq1.
\]

In the case of the commutator our approach provides a new proof of
\cite[Theorem 2]{BCP} obtaining a quantitative estimate as well.
An arguable drawback of the estimates above is that in neither of
them we recover the best known dependence in the case $v=1$. We wonder
whether the factor $[uv]_{A_{\infty}}$ in each of them can be removed.

In our following result we assume that $v\in A_{1}$ and $u\in A_{1}(v)$.
It is not hard to check that those conditions are equivalent to assume
that $u\in A_{1}$ and $v\in A_{1}(u)$, so there is no gain in terms
of the size of the class of weights considered. However, in this case,
if $v=1$ we recover the best known estimates for $u\in A_{1}$.
\begin{thm}
\label{Thm:2}Let $v\in A_{1}$ and $u\in A_{1}(v)$.
\begin{enumerate}
\item If $T$ is a Calderón-Zygmund operator

\[
\left\Vert \frac{T(fv)(x)}{v(x)}\right\Vert _{L^{1,\infty}(uv)}\leq c_{n,T}[v]_{A_{1}}[v]_{A_{\infty}}[u]_{A_{1}(v)}\log\left(e+[uv]_{A_{\infty}}[v]_{A_{1}}\right)\|f\|_{L^{1}(uv)}
\]
and if $m$ is a positive integer, $r>1$ and $b\in Osc_{\exp L^{r}}$
then 
\begin{equation}
uv\left(\left\{ x\in\mathbb{R}^{n}\,:\,\left|\frac{T_{b}^{m}(fv)}{v}\right|>t\right\} \right)\leq c_{n,p}\Gamma_{u,v}^{m}\int_{\mathbb{R}^{n}}\Phi_{\frac{m}{r}}\left(\frac{|f|\|b\|_{Osc_{\exp L^{r}}}^{m}}{t}\right)uv\label{eq:DepComm-1}
\end{equation}
where 
\[
\Gamma_{u,v}^{m}=\sum_{h=0}^{m}[v]_{A_{1}}[v]_{A_{\infty}}^{\frac{h}{r}}[uv]_{A_{\infty}}^{\frac{m-h}{r}}[u]_{A_{1}(v)}[v]_{A_{\infty}}\log\left(e+[v]_{A_{1}}[v]_{A_{\infty}}^{\frac{h}{r}}[uv]_{A_{\infty}}^{\frac{m-h}{r}}\right)^{1+\frac{h}{r}}
\]
and $\Phi_{\rho}(t)=t\left(1+\log^{+}(t)\right)^{\rho}.$
\item If $\Omega\in L^{\infty}(\mathbb{S}^{n-1})$ then
\[
\left\Vert \frac{T_{\Omega}(fv)(x)}{v(x)}\right\Vert _{L^{1,\infty}(uv)}\leq c_{n,\Omega}[uv]_{A_{\infty}}[v]_{A_{1}}[u]_{A_{1}(v)}[v]_{A_{\infty}}\log\left(e+[uv]_{A_{\infty}}[v]_{A_{1}}\right)\|f\|_{L^{1}(uv)}
\]
\[
\]
\end{enumerate}
\end{thm}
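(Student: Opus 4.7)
The plan is to follow the sparse-domination/Calderón-Zygmund-decomposition scheme of Theorem \ref{Thm:1}, exchanging the use of $v \in A_p(u)$ for the stronger hypothesis $v \in A_1$. First reduce each operator to its sparse surrogate: for a Calderón-Zygmund $T$, the pointwise sparse bound $|T(fv)| \lesssim_n \mathcal{A}_{\mathcal{S}}(fv)$; for the commutator $T_b^m$, the iterated sparse bound involving the oscillation factors $|b - b_Q|^j$; for $T_\Omega$, the bilinear $(L^1, L^s)$-sparse form of Conde-Alonso--Culiuc--Di Plinio--Ou. The task then reduces to bounding $\mathcal{A}_{\mathcal{S}}(fv)/v$ (and its commutator and rough variants) in $L^{1,\infty}(uv)$.

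Second, since $v \in A_1$ and $u \in A_1(v)$ force $uv \in A_\infty$, normalize $\|f\|_{L^1(uv)} = 1$, WLOG $f \geq 0$, and perform a Calderón-Zygmund decomposition of $f$ at height $t$ with respect to the measure $uv\,dx$. This yields a disjoint family of dyadic cubes $\{Q_j\}$ with $t < \langle f \rangle_{uv, Q_j} \lesssim_n [uv]_{A_\infty} t$ and $f \leq t$ off $\Omega = \bigcup_j Q_j$; decompose $f = g + b$ in the usual way. Control the good part via $\|g\|_{L^2(uv)}^2 \lesssim t$ paired with an $L^2(uv)$ bound for the rescaled sparse operator $h \mapsto \mathcal{A}_{\mathcal{S}}(hv)/v$, whose constant is essentially $[v]_{A_1}$ thanks to the $A_1$ hypothesis. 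For the bad part, discard $E^* = \bigcup_j 3Q_j$, of $uv$-measure $\lesssim t^{-1}$; outside $E^*$, the sparse structure together with the pointwise bound $v(x) \lesssim [v]_{A_1}\langle v \rangle_Q$ on $Q \in \mathcal{S}$ and the $A_1(v)$ property of $u$ allow $v$ to be pulled out of the sparse averages and $u$ to be absorbed through an $A_1(v)$ step, producing the product $[v]_{A_1}[u]_{A_1(v)}$; a reverse Hölder application on $uv$ then supplies the $[v]_{A_\infty}$ factor and the logarithm in the correct form. Setting $v = 1$ collapses the extra factors to $[u]_{A_1}\log(e + [u]_{A_\infty})$, recovering the sharp $A_1$ bound.

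For the commutator, iterate the decomposition $b(x) - b_Q = (b(x) - b_{Q_0}) + (b_{Q_0} - b_Q)$ along the principal cubes of the sparse family \emph{\`a la} \cite{LORRArxiv}, generating the sum $\sum_{h=0}^m$ in $\Gamma_{u,v}^m$; the generalized Hölder inequality for Orlicz averages pairs $\Phi_{m/r}$ with $\exp L^r$ and is responsible both for the Young function on the right-hand side of \eqref{eq:DepComm-1} and for the fractional $(1 + h/r)$-powers of the logarithm. For $T_\Omega$, the $L^s$-sparse form forces an additional reverse-Hölder step on $uv$, which accounts for the extra $[uv]_{A_\infty}$ factor appearing outside the logarithm in the rough-singular case.

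The main obstacle is the precise bookkeeping of the bad-part estimate: to recover $\log(e + [uv]_{A_\infty}[v]_{A_1})$ rather than a larger expression, one must choose the reverse-Hölder exponent of $uv$ as $1 + c/[uv]_{A_\infty}$ and balance it against the $A_1$ averages of $v$, so that only the intended powers of $[uv]_{A_\infty}$, $[v]_{A_1}$, $[v]_{A_\infty}$ and $[u]_{A_1(v)}$ survive. Propagating this balance through the $m$-fold commutator iteration, while ensuring that exactly the $(1 + h/r)$-power of the logarithm appears in $\Gamma_{u,v}^m$, is the most delicate point.
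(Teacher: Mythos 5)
Your plan replaces the paper's mechanism with a classical Calder\'on--Zygmund decomposition of $f$ with respect to $uv\,dx$, and this is where the argument breaks down. First, the good-part step: bounding $uv(\{A_{\mathcal S}(gv)/v>t\})$ by $t^{-2}\|A_{\mathcal S}(gv)/v\|_{L^2(uv)}^{2}$ amounts to an $L^{2}(uv^{-1})$ bound for $A_{\mathcal S}$, i.e.\ an $A_{2}$ estimate for the weight $uv^{-1}$; even granting $[uv^{-1}]_{A_{2}}\lesssim[v]_{A_{1}}^{2}[u]_{A_{1}(v)}$ under the hypotheses, Chebyshev squares the operator norm, so this route cannot yield the asserted bound $[v]_{A_{1}}[v]_{A_{\infty}}[u]_{A_{1}(v)}\log\left(e+[uv]_{A_{\infty}}[v]_{A_{1}}\right)$, and it does not collapse to $[u]_{A_{1}}\log(e+[u]_{A_{\infty}})$ when $v=1$ as you claim (the naive $L^{2}$ good-part argument gives at best a power of $[u]_{A_{1}}$ strictly larger than one there). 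Second, the bad-part step: once you have passed to the positive sparse surrogate $A_{\mathcal S}$, the mean-zero cancellation of the bad functions is useless, and you offer no substitute; the assertion that ``a reverse H\"older application on $uv$ then supplies the $[v]_{A_{\infty}}$ factor and the logarithm in the correct form'' is exactly the point that needs proof, and reverse H\"older is not the mechanism that produces a $\log(e+[uv]_{A_{\infty}}[v]_{A_{1}})$ factor. The same objections apply, amplified, to your treatment of the commutator and of $T_{\Omega}$.

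For comparison, the paper argues quite differently: after sparse domination it removes the set where a dyadic weighted maximal function ($M_{v}^{\mathcal D}f$, resp.\ an Orlicz variant, resp.\ $M_{uv}^{\mathcal F}f$) exceeds $\tfrac12$, dualizes $uv(G)$ against $g=\chi_{G}$, uses $v\in A_{1}$ and $u\in A_{1}(v)$ to convert the averages into $\langle f\rangle_{Q}^{v}$ and $\langle g\rangle_{Q}^{uv}$, stratifies the sparse family into subfamilies $\mathcal S_{k,j}$ according to the dyadic sizes $2^{-j}$ and $2^{-k}$ of these averages, and proves two competing bounds for each partial sum $s_{k,j}$: one via the bounded-overlap sets of Lemma \ref{Lem:DisEQ} (giving $c_{n}2^{-k}[u]_{A_{1}(v)}[v]_{A_{\infty}}$) and one via Lemma \ref{Lem:UnQ} together with the weak type of $M_{uv}$ from Lemma \ref{Lem:MUnDyad} (giving $c_{n}[uv]_{A_{\infty}}2^{-j}2^{-k}uv(G)$). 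The logarithm and the absorption of $\tfrac12 uv(G)$ then come from the double-sum Lemma \ref{Lem:DoubleSum}, not from reverse H\"older, and the $(1+\tfrac{h}{r})$ power of the logarithm in $\Gamma_{u,v}^{m}$ comes from the exponent $\rho_{1}=\tfrac{h}{r}$ in that lemma rather than directly from the generalized H\"older inequality. None of this machinery appears in your sketch, and the steps you do propose either give the wrong constants or are left unjustified at the decisive point, so the proposal cannot be accepted as a proof.
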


As we pointed out above, notice that this result recovers the best
dependences known obtained in \cite{LOP,LOP2,LiPRR,LORRArxiv,IFRR}
in the case, $v=1$. Furthermore, in case of the commutator we obtain
the following estimate
\begin{equation}
u\left(\left\{ x\in\mathbb{R}^{n}\,:\,\left|T_{b}^{m}(f)\right|>t\right\} \right)\leq c_{n,p}[u]_{A_{1}}[u]_{A_{\infty}}^{\frac{m}{r}}\log\left(e+[u]_{A_{\infty}}\right)\int_{\mathbb{R}^{n}}\Phi_{\frac{m}{r}}\left(\frac{|f|\|b\|_{Osc_{\exp L^{r}}}^{m}}{t}\right)u.\label{eq:endpointOsc}
\end{equation}
Observe that \eqref{eq:endpointOsc} contains as a particular case
the endpoint estimate obtained in \cite{IFRR} and provides precise
quantitative bound for the case in which the symbol has better local
decay properties than $BMO$ functions. We recall that in \cite{A},
it was shown that if a commutator of a certain singular integral satisfies
a weak-type $(1,1)$ estimate then $b\in L^{\infty}$ and that the
$L\log L$ estimate, first settled in \cite{P}, implies that $b\in BMO$.
Bearing those results in mind we wonder whether $b\in Osc_{\exp L^{r}}$
should be a neccesary condition for \eqref{eq:endpointOsc}, at least
in the case $u=1$, to hold.

The rest of the paper is organized as follows. Section \ref{sec:Preliminaries}
is devoted to provide some basic results and to fix notation that
will be used throughout the remainder of the paper and in Section
\ref{sec:Proofs} we provide the proofs of the main results.

\section{\label{sec:Preliminaries}Preliminaries}

\subsection{Sparse domination results}

In this section we begin borrowing some definitions from \cite{LN}. 

Given a cube $Q$ we denote by $\mathcal{D}(Q)$ the standard dyadic
grid relative to $Q$. 

We say that a family of cubes $\mathcal{D}$ is a dyadic lattice it
satisfies the following conditions.
\begin{enumerate}
\item If $Q\in\mathcal{D}$ then $\mathcal{D}(Q)\subset\mathcal{D}$.
\item If $P,Q\in\mathcal{D}$ then there exists $R\in\mathcal{D}$ such
that $P,Q\in\mathcal{D}(R)$.
\item For every compact set $K\subset\mathbb{R}^{d}$ there exists some
$Q\in\mathcal{D}$ such that $K\subset Q$.
\end{enumerate}
We recall that $\mathcal{S}$ is a $\eta$-sparse family if for every
$Q\in\mathcal{S}$ there exists $E_{Q}\subset Q$ such that
\begin{enumerate}
\item $\eta|Q|\leq|E_{Q}|$. 
\item The sets $E_{Q}$ are pairwise disjoint.
\end{enumerate}
In some situations it is useful to approximate arbitrary cubes by
dyadic cubes. For that purpose, one dyadic lattice is not enough,
however $3^{n}$ are. That fact follows from the following Lemma that
we borrow from \cite{LN}.
\begin{lem}
\label{Lem:3ndlt}For every dyadic lattice $\mathcal{D}$ there exist
$3^{n}$ dyadic lattices $\mathcal{D}_{j}$ such that 
\[
\left\{ 3Q\,:\,Q\in\mathcal{D}\right\} =\bigcup_{j=1}^{3^{n}}\mathcal{D}_{j}
\]
and for every cube $Q\in\mathcal{D}$ and $j=1,\dots,3^{n}$, there
exists a unique cube $R\in\mathcal{D}_{j}$ of sidelenght $l_{R}=3l_{Q}$
containing $Q$.
\end{lem}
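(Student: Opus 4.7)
The plan is to construct the $3^{n}$ sublattices explicitly by classifying each dilate $3Q$ by its ``offset class'' modulo $3$, and then verifying that each resulting family is itself a dyadic lattice. From the closure and common-ancestor axioms on $\mathcal{D}$, all sidelengths in $\mathcal{D}$ are of the form $2^{k}\ell_{0}$ for a fixed reference length $\ell_{0}$, and for each scale $k$ the cubes of $\mathcal{D}$ tile $\mathbb{R}^{n}$ modulo a null set with corners lying on a translate of $(2^{k}\ell_{0})\mathbb{Z}^{n}$; I would index each such cube by an integer vector $m(Q)\in\mathbb{Z}^{n}$. The key geometric observation is that the cubes $R$ of sidelength $3\ell_{Q}$ containing a given $Q\in\mathcal{D}$ are exactly the $3^{n}$ dilates $3Q'$ as $Q'$ runs through the same-scale neighbors $Q+\ell_{Q}e$, $e\in\{-1,0,1\}^{n}$, and these $3^{n}$ dilates have pairwise distinct residues $m(Q')\bmod 3\in(\mathbb{Z}/3\mathbb{Z})^{n}$.

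For each $j\in(\mathbb{Z}/3\mathbb{Z})^{n}$ I would set
\[
\mathcal{D}_{j}:=\bigcup_{k\in\mathbb{Z}}\bigl\{\,3Q\,:\,Q\in\mathcal{D},\ \ell_{Q}=2^{k}\ell_{0},\ m(Q)\equiv\sigma_{k}(j)\pmod 3\,\bigr\},
\]
where $\sigma_{k}:(\mathbb{Z}/3\mathbb{Z})^{n}\to(\mathbb{Z}/3\mathbb{Z})^{n}$ is the bijection tracking how the residue class mod $3$ transforms when passing from a cube at scale $k$ to its dyadic parent at scale $k{+}1$ (which acts on integer indices by $m\mapsto\lfloor m/2\rfloor$). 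Because $2$ is a unit in $\mathbb{Z}/3\mathbb{Z}$, the induced maps $\sigma_{k}$ are well-defined bijections, which is what makes the residue-class selection consistent across scales and forces the count to be exactly $3^{n}$.

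It then remains to verify the three dyadic-lattice axioms for each $\mathcal{D}_{j}$. Closure under dyadic descendants is the content of the compatibility built into $\sigma_{k}$: the $2^{n}$ dyadic children of any $3Q\in\mathcal{D}_{j}$ are again of the form $3Q''$ with $Q''\in\mathcal{D}$ lying in the residue class $\sigma_{k-1}(j)$. The common-ancestor and large-cube axioms transfer verbatim from the corresponding properties of $\mathcal{D}$ by dilating by $3$. The uniqueness clause of the statement is immediate from the key observation: among the $3^{n}$ same-scale neighbors of $Q$, exactly one has residue $\sigma_{k}(j)$, so exactly one $R\in\mathcal{D}_{j}$ of sidelength $3\ell_{Q}$ contains $Q$. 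Finally, since every $Q\in\mathcal{D}$ has some residue class at its scale, its dilate $3Q$ is swept up by exactly one $\mathcal{D}_{j}$, giving $\{3Q:Q\in\mathcal{D}\}=\bigcup_{j}\mathcal{D}_{j}$.

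The hardest part I anticipate is making the residue-propagation step rigorous, namely verifying that the scale-dependent shift $x_{k}$ of $\mathcal{D}$ interacts correctly with the residues modulo $3$ when one halves the scale, so that the dyadic children of a cube declared to lie in $\mathcal{D}_{j}$ really stay in $\mathcal{D}_{j}$. The fact that precisely $3^{n}$ sublattices appear (and not some other count) is ultimately the arithmetic statement $\gcd(2,3)=1$, which is what drives the whole construction.
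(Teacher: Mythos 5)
The paper does not actually prove this lemma --- it is imported verbatim from Lerner--Nazarov \cite{LN} --- so there is no in-paper argument to compare against; your proposal is, in substance, the standard proof from that reference, and its overall structure (classify the dilates $3Q$ by the residue of the position index of $Q$ modulo $3$ at each scale, propagate the residue class consistently across scales, and read off uniqueness from the fact that the $3^{n}$ same-scale neighbours of $Q$ realize all residues in $(\mathbb{Z}/3\mathbb{Z})^{n}$) is correct. One detail is mis-stated, though it is exactly the step you flagged as delicate: the compatibility map $\sigma_{k}$ is \emph{not} the one induced by the dyadic parent map $m\mapsto\lfloor m/2\rfloor$ on $\mathcal{D}$. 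What one actually needs is the relation between $3Q$ and its own $2^{n}$ dyadic halves: in one dimension, if $Q$ has index $m$ at scale $k$, the two dyadic children of $3Q$ are $3Q''$ with $Q''$ of index $2m-1$ and $2m+2$ at scale $k-1$, so both children lie in the residue class $2m+2\equiv 2(m+1)\pmod 3$, and the correct propagation map is the affine bijection $j\mapsto 2j+2$ of $\mathbb{Z}/3\mathbb{Z}$ (coordinatewise in dimension $n$). Your stated reason for bijectivity --- that $2$ is a unit mod $3$ --- is still the right one, so the construction survives, but as written the definition of $\mathcal{D}_{j}$ would assign the children of a cube of $\mathcal{D}_{j}$ to the wrong class. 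A second, smaller point: the common-ancestor axiom for $\mathcal{D}_{j}$ does not transfer ``verbatim,'' since $P\subset S$ gives $3P\subset 3S$ but not $3P\subset 3S'$ for the neighbour $S'$ of $S$ selected by the residue class; one should instead note that at each scale the cubes of $\mathcal{D}_{j}$ tile $\mathbb{R}^{n}$ and these tilings are nested, whence a sufficiently large cube of $\mathcal{D}_{j}$ containing both $3P$ and $3Q$ is automatically a common dyadic ancestor.
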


In the last years, and after Lerner's simplification the proof of
the $A_{2}$ theorem \cite{L} that had been settled earlier by Hytönen
\cite{H}, the sparse domination approach has been widely and succesfully
applied in the theory of weights. The philosophy behind that approach
consists in controlling, in some sense, the operator that we want
to study by suitable sparse operators and providing estimates for
the latter ones, which are in general easier to settle.

In the following Theorem we gather the sparse domination results that
we will rely upon in the main results of the paper.
\begin{thm}
Let $f\in\mathcal{C}_{c}^{\infty}.$ 
\begin{description}
\item [{\cite{CAR,LN,La,HRT,Le}}] If $T$ is a Calderón-Zygmund operator
there exist $3^{n}$ $\varepsilon$-sparse families contained in $3^{n}$
dyadic lattices $\mathcal{D}_{j}$ such that 
\[
|Tf(x)|\leq c_{n,T,\varepsilon}\sum_{j=1}^{3^{n}}A_{\mathcal{S}}\left(|f|\right)(x)
\]
where $A_{\mathcal{S}}f(x)=\sum_{Q\in\mathcal{S}}\frac{1}{|Q|}\int_{Q}f(y)dy\chi_{Q}(x)$. 
\item [{\cite{IFRR,LORRArxiv}}] If $T$ is a Calderón-Zygmund operator
and $b\in BMO$ then there exist $3^{n}$ $\varepsilon$-sparse families
contained in $3^{n}$ dyadic lattices $\mathcal{D}_{j}$ such that
\[
|T_{b}^{m}f(x)|\leq c_{n,T,\varepsilon}\sum_{j=1}^{3^{n}}\sum_{h=0}^{m}A_{\mathcal{S}}^{m,h}(b,f)(x)
\]
where $h=0,\dots,m$ and
\[
A_{\mathcal{S}}^{m,h}(b,f)(x)=\sum_{Q\in\mathcal{S}}|b(x)-b_{Q}|^{m-h}\frac{1}{|Q|}\int_{Q}|b-b_{Q}|^{h}f\chi_{Q}(x).
\]
\item [{\cite{CACDiO,LRough}}] If $\Omega\in L^{\infty}(\mathbb{S}^{n-1})$
then there exists a sparse family $\mathcal{S}$ such that 
\begin{equation}
\left|\int_{\mathbb{R}^{n}}T_{\Omega}fg\right|\leq c_{n,\Omega}r'\Lambda_{\mathcal{S}}^{r}(f,g)\quad r>1\label{eq:Rough}
\end{equation}
where $f\in L^{r}$ and $g\in L_{\text{loc}}^{1}$
\[
\Lambda_{\mathcal{S}}^{r}(f,g)=\sum_{Q\in\mathcal{S}}\frac{1}{|Q|}\int_{Q}|f|\left(\frac{1}{|Q|}\int_{Q}|g|^{r}\right)^{\frac{1}{r}}|Q|.
\]
\end{description}
\end{thm}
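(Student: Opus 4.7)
The plan is to prove each of the three parts separately using a common framework, since they concern different operators. In all three cases, Lemma~\ref{Lem:3ndlt} reduces the task to establishing a sparse domination relative to a single dyadic lattice $\mathcal{D}$, at the cost of the multiplicative factor $3^{n}$; one then fixes an arbitrary starting cube $Q_{0}\in\mathcal{D}$ containing the support of the relevant truncation of $f$ and produces an $\varepsilon$-sparse family inside $\mathcal{D}(Q_{0})$ by a recursive stopping-time argument.

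For the Calder\'on-Zygmund case the cleanest route is Lerner's approach via the grand maximal truncation operator
\[
\mathcal{M}_{T}f(x)=\sup_{Q\ni x}\bigl\|T(f\chi_{\mathbb{R}^{n}\setminus 3Q})\bigr\|_{L^{\infty}(Q)}.
\]
First I would establish that $\mathcal{M}_{T}$ is of weak type $(1,1)$, which follows from the Calder\'on-Zygmund decomposition and the H\"ormander regularity of the kernel. Given $Q_{0}$, I would then select the stopping family consisting of the maximal subcubes $Q\subset Q_{0}$ on which either $\langle|f|\rangle_{Q}>C\langle|f|\rangle_{Q_{0}}$ or $\mathcal{M}_{T}(f\chi_{3Q_{0}})>C\langle|f|\rangle_{Q_{0}}$; the weak-type bound and Chebyshev guarantee that the union of these stopping cubes has measure at most $\frac{1}{2}|Q_{0}|$, which is the $\varepsilon$-sparseness. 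Iterating this selection inside each stopping cube and summing the geometric series produces the pointwise bound.

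For the commutator part I would start from the pointwise identity
\[
T_{b}^{m}f=\sum_{k=0}^{m}\binom{m}{k}(-1)^{k}(b-\lambda)^{m-k}T\bigl((b-\lambda)^{k}f\bigr),
\]
valid for any constant $\lambda$, and iterate the previous construction while choosing $\lambda=b_{Q}$ on each stopping cube. The extra ingredient is a commutator analogue of the grand maximal truncation controlling the tails with the symbol subtracted at the scale of the cube; the stopping condition is enlarged to include oscillation averages of $b$. The John-Nirenberg inequality, applied to $b\in BMO$, then guarantees that the local quantities $\frac{1}{|Q|}\int_{Q}|b-b_{Q}|^{h}|f|$ retain the integrability needed for the geometric summation, which is exactly what produces the bilinear sparse forms $A_{\mathcal{S}}^{m,h}(b,f)$.

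For the rough singular integral part the absence of a weak $(1,1)$ bound for $T_{\Omega}$ forces a bilinear-form domination with $L^{r}$ inner averages, following Conde-Alonso, Culiuc, Di~Plinio and Ou. I would test $\int T_{\Omega}f\,g$ against pairs of stopping cubes: build the sparse family by stopping simultaneously on averages of $f$ and on $L^{r}$ averages of $g$, and estimate each local piece using the $L^{r}\to L^{r}$ boundedness of $T_{\Omega}$ provided by $\Omega\in L^{\infty}(\mathbb{S}^{n-1})$. The main obstacle, and the source of the factor $r'$ in \eqref{eq:Rough}, is controlling the off-diagonal contributions without a weak-type endpoint; this is handled by applying H\"older's inequality at level $r$ to the $g$-side, which is precisely what forces the inner $L^{r}$ average in $\Lambda_{\mathcal{S}}^{r}$ and makes the constant blow up as $r\downarrow 1$.
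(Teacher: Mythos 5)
The paper offers no proof of this theorem: it is a compendium of sparse domination results quoted verbatim from the cited literature ([CAR, LN, La, HRT, Le] for Calder\'on--Zygmund operators, [IFRR, LORRArxiv] for commutators, [CACDiO, LRough] for rough kernels), and it is used as a black box in Section \ref{sec:Proofs}. Your outline for the first two items does follow the standard route of those references: Lerner's grand maximal truncation operator $\mathcal{M}_{T}$, its weak $(1,1)$ bound, the local stopping-time recursion on $\langle|f|\rangle_{3Q}$ and $\mathcal{M}_{T}(f\chi_{3Q_{0}})$, the $3^{n}$-lattice reduction, and for commutators the expansion of $T_{b}^{m}$ in powers of $b-\lambda$ with $\lambda=b_{Q}$ chosen at each stopping cube. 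One correction on the commutator part: the John--Nirenberg inequality plays no role in the sparse domination itself. The whole point of the form $A_{\mathcal{S}}^{m,h}(b,f)$ is that the oscillations $|b-b_{Q}|$ are kept inside the sparse object, so the domination holds for any locally integrable $b$; $b\in BMO$ (or $Osc_{\exp L^{r}}$) is only invoked later, when the paper estimates $\||b-b_{Q}|^{h}\|_{\exp L^{r/h}(w),Q}$ in the proofs of Theorems \ref{Thm:1} and \ref{Thm:2}.

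The third item is where your sketch has a genuine gap. For $\Omega\in L^{\infty}(\mathbb{S}^{n-1})$ the grand-maximal-truncation strategy is unavailable (no weak $(1,1)$ control is known for the relevant maximal truncation), and the constant $c_{n,\Omega}r'$ in \eqref{eq:Rough} does not come from ``H\"older at level $r$ on the $g$-side.'' In Conde-Alonso--Culiuc--Di Plinio--Ou the kernel is decomposed into smooth dyadic pieces, each piece is shown to satisfy localized $L^{1}\times L^{r}$ testing estimates with Fourier-decay gains, and the factor $r'$ emerges from interpolating those decay estimates against trivial bounds when summing the pieces; a plain H\"older step would not produce a finite bilinear form at all, let alone the sharp $r'$ blow-up. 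As written, your argument for \eqref{eq:Rough} asserts the conclusion of the hard step rather than proving it. Since the paper itself treats all three statements as citations, the practical remedy is simply to cite them as such; if you want self-contained proofs, the rough case requires the full machinery of \cite{CACDiO,LRough}.
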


\begin{rem}
\label{Rem:3nRough}Notice that the $3^{n}$-dyadic lattices trick
(Lemma \ref{Lem:3ndlt}) allows us to show that for every dyadic lattice
$\mathcal{D}$, 
\[
\Lambda_{\mathcal{S}}^{r}(f,g)\leq\sum_{j=1}^{3^{n}}\Lambda_{\mathcal{S}_{j}}^{r}(f,g)
\]
where each $\mathcal{S}_{j}\subset\mathcal{D}_{j}$ and the choice
of the dyadic lattices $\mathcal{D}_{j}$ is independent of $f,g$. 
\end{rem}

\subsection{$A_{p}$ weights and Orlicz maximal functions\label{sec:ApWeightsOrliczmaximal}}

We recall that given a weight $u$, $v\in A_{p}(u)$ if 
\[
[v]_{A_{p}(u)}=\sup_{Q}\frac{1}{u(Q)}\int_{Q}vu\left(\frac{1}{u(Q)}\int_{Q}v^{-\frac{1}{p-1}}u\right)^{p-1}<\infty
\]
in the case $1<p<\infty$ and 
\[
[v]_{A_{1}(u)}=\left\Vert \frac{M_{u}v}{v}\right\Vert _{L^{\infty}}<\infty
\]
where $M_{u}v=\sup_{Q}\frac{1}{u(Q)}\int_{Q}vu$. Analogously if $u=1$
we recover the classical Muckenhoupt's condition. 

We would like also to recall that
\[
A_{\infty}=\bigcup_{p\geq1}A_{p}.
\]
This class of weights is characterized in terms of the following condition
\[
w\in A_{\infty}\iff[w]_{A_{\infty}}=\sup_{Q}\frac{1}{w(Q)}\int_{Q}M(\chi_{Q}w)<\infty.
\]
This characterization was discovered by Fujii \cite{F} and rediscovered
by Wilson \cite{W}. Up until now that $[w]_{A_{\infty}}$ is the
smallest constant characterizing the $A_{\infty}$ class (see Pérez
and Hytönen \cite{HPAinfty}). A result that we will use as well is
the following reverse Hölder inequality that was obtained in \cite{HPAinfty}
(see \cite{HPR} for another proof).
\begin{lem}
There exists $\tau_{n}$ such that for every $w\in A_{\infty}$ 
\[
\left(\frac{1}{|Q|}\int_{Q}w^{r_{w}}\right)^{\frac{1}{r_{w}}}\leq\frac{2}{|Q|}\int_{Q}w
\]
where $r_{w}=1+\frac{1}{\tau_{n}[w]_{A_{\infty}}}$.
\end{lem}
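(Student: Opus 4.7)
The plan is to fix a cube $Q$, normalize by replacing $w$ with $w/w_Q$ where $w_Q = \frac{1}{|Q|}\int_Q w$, and reduce the claim to showing $\frac{1}{|Q|}\int_Q w^{r_w}\leq 2^{r_w}$. Writing $M_Q^d$ for the dyadic maximal operator relative to $\mathcal{D}(Q)$ acting on functions restricted to $Q$, the Lebesgue differentiation theorem gives $w(x)\leq M_Q^d(w\chi_Q)(x)$ for a.e.\ $x\in Q$, so it is enough to prove the estimate with $M_Q^d(w\chi_Q)$ in place of $w$. This replaces an arbitrary weight with a maximal function whose super-level sets admit a clean Calder\'on--Zygmund description.

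The key input is a good-$\lambda$ inequality for $M_Q^d(w\chi_Q)$. For $\lambda\geq 1$, decompose $\{M_Q^d(w\chi_Q)>\lambda\}=\bigcup_j Q_{\lambda,j}$ into maximal dyadic subcubes of $Q$ with average $>\lambda$, so that $\lambda<\frac{1}{|Q_{\lambda,j}|}\int_{Q_{\lambda,j}} w\leq 2^n\lambda$ by maximality plus doubling. Apply the Fujii--Wilson definition on each subcube, $\int_{Q_{\lambda,j}}M(w\chi_{Q_{\lambda,j}})\leq [w]_{A_\infty}w(Q_{\lambda,j})$, and combine with Markov's inequality together with the observation that for $x\in Q_{\lambda,j}$ any dyadic cube witnessing $M_Q^d(w\chi_Q)(x)>a\lambda$ must be contained in $Q_{\lambda,j}$ (its ancestors have average $\leq\lambda$). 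Summing over $j$ this gives, for every $a>1$,
\[
|\{M_Q^d(w\chi_Q)>a\lambda\}|\leq \frac{2^n[w]_{A_\infty}}{a}\,|\{M_Q^d(w\chi_Q)>\lambda\}|.
\]

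Next I would iterate this good-$\lambda$ along a geometric sequence of levels $a^k$ to produce decay of $|\{M_Q^d(w\chi_Q)>t\}|$ in $t$ at a rate controlled by $[w]_{A_\infty}$, and then plug the resulting decay into the layer-cake formula
\[
\frac{1}{|Q|}\int_Q (M_Q^d(w\chi_Q))^{r}=\frac{r}{|Q|}\int_0^\infty t^{r-1}|\{M_Q^d(w\chi_Q)>t\}|\,dt.
\]
This reduces the estimate to the convergence of a geometric series, and choosing $r-1$ of the correct order $1/(\tau_n[w]_{A_\infty})$ should make the resulting sum at most $2^r$.

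The main obstacle is exactly getting the sharp dependence $r_w-1\sim 1/[w]_{A_\infty}$, rather than the weaker $1/\log[w]_{A_\infty}$ that falls out of a crude iteration of the good-$\lambda$ above. To reach the sharp scaling one needs either a principal-cubes stopping-time refinement in which each stopping set carries a definite fraction of order $1/[w]_{A_\infty}$ of the mass of its parent, or a self-improvement of the decay to true exponential form (essentially a John--Nirenberg inequality for $\log w$ with BMO norm $\sim[w]_{A_\infty}$). Tracking the constants carefully so that the final bound matches the explicit $2$ on the right-hand side is the technical heart of the proofs in \cite{HPAinfty,HPR}.
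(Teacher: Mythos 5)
This lemma is not proved in the paper at all: it is quoted verbatim from \cite{HPAinfty} (see also \cite{HPR}), so the only fair comparison is between your sketch and the argument in those references. Your proposal is a plan rather than a proof, and you yourself identify the point at which it breaks: the good-$\lambda$ inequality
\[
|\{M_Q^d(w\chi_Q)>a\lambda\}|\leq \frac{2^n[w]_{A_\infty}}{a}\,|\{M_Q^d(w\chi_Q)>\lambda\}|
\]
only yields decay once $a\gtrsim 2^{n}[w]_{A_\infty}$, so iterating it along the levels $a^k$ gives $|\{M_Q^d(w\chi_Q)>t\}|\lesssim t^{-\alpha}|Q|$ with $\alpha\sim 1/\log(2^n[w]_{A_\infty})$. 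That forces $r_w-1\sim 1/\log[w]_{A_\infty}$, not the claimed $1/(\tau_n[w]_{A_\infty})$, and moreover the layer-cake integral then contributes a factor of order $1/(\alpha-(r-1))\sim\log[w]_{A_\infty}$, so even the constant $2$ on the right-hand side is lost. Since the sharp exponent $r_w=1+\frac{1}{\tau_n[w]_{A_\infty}}$ is exactly what the lemma asserts (and what the quantitative bounds in Theorems \ref{Thm:1} and \ref{Thm:2} need), the heart of the proof is missing; naming two possible repairs (principal cubes, or a John--Nirenberg self-improvement) without carrying either out does not close the gap.

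For the record, the argument in \cite{HPAinfty} does not iterate a good-$\lambda$ at all. After the same normalization and the same Calder\'on--Zygmund decomposition of $\{M_Q^d w>\lambda\}$ at \emph{every} level $\lambda\geq \langle w\rangle_Q$ simultaneously, one writes $\int_Q w^{1+\epsilon}\leq\int_Q (M_Q^d w)^{\epsilon}w$ by the layer-cake formula for the measure $w\,dx$, uses $\langle w\rangle_{Q_{\lambda,j}}\leq 2^n\lambda$ together with the Fujii--Wilson bound $\int_{Q_{\lambda,j}}M(w\chi_{Q_{\lambda,j}})\leq[w]_{A_\infty}w(Q_{\lambda,j})$ to dominate the tail by $c_n\,\epsilon\,[w]_{A_\infty}\int_Q (M_Q^d w)^{\epsilon}w$, and then \emph{absorbs} this term into the left-hand side by choosing $\epsilon=\frac{1}{\tau_n[w]_{A_\infty}}$ so that $c_n\epsilon[w]_{A_\infty}\leq\frac12$. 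It is this single absorption step, linear in $\epsilon[w]_{A_\infty}$, that produces both the sharp exponent and the clean constant $2$; your sketch never reaches an inequality of that self-improving form.
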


We recall that given a Young function $A:[0,\infty)\rightarrow[0,\infty)$,
namely a convex, non-decreasing function such that $A(0)=0$ and $\frac{A(t)}{t}\rightarrow\infty$
when $t\rightarrow\infty$ we can define 
\[
\|f\|_{A(u),Q}=\|f\|_{A(L)(u),Q}=\inf\left\{ \lambda>0\,:\,\frac{1}{u(Q)}\int_{Q}A\left(\frac{|f(x)|}{\lambda}\right)u(x)dx\leq1\right\} .
\]
It is possible to provide a definition of the norm equivalent to the
latter (see \cite{KR1961}), namely
\[
\|f\|_{A(u),Q}\simeq\inf_{\mu>0}\left\{ \mu+\frac{\mu}{u(Q)}\int_{Q}A\left(\frac{|f(x)|}{\mu}\right)u(x)dx\right\} .
\]
Associated to each Young $A$ function there exists another Young
function $\overline{A}$ such that 
\[
\frac{1}{u(Q)}\int_{Q}|fg|u\leq2\|f\|_{A(u),Q}\|g\|_{\overline{A}(u),Q}.
\]
We shall drop $u$ in the notation in the case of Lebesgue measure.
Some particular cases of interest for us will be $A(t)=t\log(e+t)^{\frac{1}{r}}$
and $\overline{A}(t)=\exp(t^{r})-1$ for $r>1$. 

Let $u$ a weight and $A$ a Young function. We define the maximal
operator $M_{A(u)}^{\mathcal{F}}$ by
\[
M_{A(u)}^{\mathcal{F}}f(x)=\sup_{x\in Q\in\mathcal{F}}\|f\|_{A(u),Q}.
\]
where the supremum is taken over all the cubes in the family $\mathcal{F}$.
We shall drop the superscript in case the context makes clear the
family of cubes considered. If we choose $A(t)=t$ and $u=1$ and
$\mathcal{F}$ is the family of all cubes we recover the classical
Hardy-Littlewood operator.

Now we recall if $b\in BMO$, then
\[
\sup_{Q}\|b-b_{Q}\|_{\exp L,Q}\leq c_{n}\|b\|_{BMO}.
\]
It is possible to define classes of symbols with even better properties
of integrability than $BMO$ symbols. Given $r>1$ we say that $b\in Osc_{\exp L^{r}}(w)$
if 
\[
\|b\|_{Osc_{\exp L^{r}}(w)}=\sup_{Q}\|b-b_{Q}\|_{\exp L^{r}(w),Q}<\infty.
\]
Note that $Osc_{\exp L^{r}}\subsetneq BMO$ for every $r>1$. It is
not hard to prove that for those classes of functions the following
estimates hold.
\begin{lem}
Let $w\in A_{\infty}$ and $b\in Osc_{\exp L^{r}}$. Then 
\[
\|b-b_{Q}\|_{\exp L^{r}(w)}\leq c[w]_{A_{\infty}}^{\frac{1}{r}}\|b\|_{Osc_{\exp L^{r}}}.
\]
Furthermore, if $j>0$ then
\[
\left\Vert \left|b-b_{Q}\right|^{j}\right\Vert _{\exp L^{\frac{r}{j}}(w)}\leq c[w]_{A_{\infty}}^{\frac{j}{r}}\|b\|_{Osc_{\exp L^{r}}}^{j}.
\]
\end{lem}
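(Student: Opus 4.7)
The plan is to reduce the weighted Luxemburg norm to the unweighted one via reverse Hölder, pick up a harmless power of $[w]_{A_\infty}$ along the way, and then derive the $j$-th power inequality as an immediate corollary.

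First I would fix $\lambda > 0$ and examine the integral
\[
I(\lambda) := \frac{1}{w(Q)}\int_{Q}\bigl(e^{|b(x)-b_{Q}|^{r}/\lambda^{r}}-1\bigr)\,w(x)\,dx,
\]
whose bound by $1$ is precisely what is needed to ensure $\|b-b_Q\|_{\exp L^r(w),Q}\leq \lambda$. Setting $s = r_{w} = 1 + 1/(\tau_n[w]_{A_\infty})$ with dual exponent $s' = 1 + \tau_n [w]_{A_\infty}$, an application of Hölder's inequality together with the reverse Hölder inequality (the lemma quoted from \cite{HPAinfty}) gives
\[
I(\lambda) \leq \frac{|Q|}{w(Q)}\left(\frac{1}{|Q|}\int_Q \bigl(e^{|b-b_Q|^r/\lambda^r}-1\bigr)^{s'}\right)^{1/s'}\cdot \frac{2\,w(Q)}{|Q|} = 2\left(\frac{1}{|Q|}\int_Q \bigl(e^{|b-b_Q|^r/\lambda^r}-1\bigr)^{s'}\right)^{1/s'}.
\]

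Next, I would invoke the elementary inequality $(e^{x}-1)^{s'}\leq e^{s'x}-1$ for $x\geq 0$, $s'\geq 1$, which reduces the last display to
\[
I(\lambda) \leq 2\left(\frac{1}{|Q|}\int_Q \bigl(e^{|b-b_Q|^r/(\lambda/(s')^{1/r})^r}-1\bigr)\right)^{1/s'}.
\]
Choosing $\lambda = (s')^{1/r}\|b\|_{Osc_{\exp L^r}}$ makes the inner unweighted integral $\leq 1$ by the definition of the $Osc_{\exp L^r}$ norm, so that $I(\lambda)\leq 2$. A standard rescaling (using $e^{x/\alpha^{r}}-1\leq\alpha^{-r}(e^x-1)$ for $\alpha\geq 1$) upgrades the bound $I(\lambda)\leq 2$ to $\|b-b_Q\|_{\exp L^r(w),Q}\leq 2^{1/r}\lambda$. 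Since $s' \leq (1+\tau_n)[w]_{A_\infty}$, we obtain
\[
\|b-b_{Q}\|_{\exp L^{r}(w),Q}\leq c\,[w]_{A_{\infty}}^{1/r}\|b\|_{Osc_{\exp L^{r}}},
\]
which is the first claim. The main (minor) obstacle here is managing the interplay between the exponent $s'$ entering via the reverse Hölder trick and the exponential Young function; the inequality $(e^x-1)^{s'}\leq e^{s'x}-1$ handles this cleanly.

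The second inequality is an immediate corollary. By the very definition of the Luxemburg norm one has the homogeneity identity
\[
\bigl\||b-b_{Q}|^{j}\bigr\|_{\exp L^{r/j}(w),Q}=\|b-b_{Q}\|_{\exp L^{r}(w),Q}^{\,j},
\]
since $\frac{1}{w(Q)}\int_Q (e^{|b-b_Q|^{r}/\lambda^{r}}-1)w\leq 1$ if and only if $\frac{1}{w(Q)}\int_Q (e^{(|b-b_Q|^{j})^{r/j}/(\lambda^{j})^{r/j}}-1)w\leq 1$. Raising the first inequality to the $j$-th power therefore yields
\[
\bigl\||b-b_{Q}|^{j}\bigr\|_{\exp L^{r/j}(w),Q}\leq c^{j}\,[w]_{A_{\infty}}^{j/r}\|b\|_{Osc_{\exp L^{r}}}^{j},
\]
completing the proof.
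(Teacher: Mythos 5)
Your proof is correct. The paper states this lemma without proof (``it is not hard to prove\ldots''), and your argument is the standard one the authors surely have in mind: H\"older against the sharp reverse H\"older exponent $s=r_w$ to pass from $w\,dx$ to Lebesgue measure at the cost of the factor $2$ and the exponent $s'\simeq[w]_{A_\infty}$, the self-improvement $(e^{x}-1)^{s'}\le e^{s'x}-1$ to absorb $s'$ into a dilation of $\lambda$ by $(s')^{1/r}$, and the exact homogeneity $\||b-b_Q|^{j}\|_{\exp L^{r/j}(w),Q}=\|b-b_Q\|_{\exp L^{r}(w),Q}^{j}$ of the Luxemburg norm for the second claim; all the elementary inequalities you invoke check out, and the resulting constant $c^{j}$ is harmless since $j\le m$ in the applications.
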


We end up this section with a result that allows us to change the
underlying weight of Orlicz averages.
\begin{lem}
\label{Lem:ContAvg}Let $u$ a weight, $v\in A_{p}(u)$, and $\Phi$
a Young function. Then, for every cube $Q$, 
\[
\|f\|_{\Phi(u),Q}\leq\|f\|_{[v]_{A_{p}(u)}\Phi^{p}(L)(uv),Q}.
\]
\end{lem}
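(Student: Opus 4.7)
The plan is to prove the inequality directly from the Luxemburg-norm definition: set $\lambda$ equal to the right-hand side, unpack what that means as an integral inequality against the measure $uv\,dx$, and then bound the unweighted average by H\"older's inequality with exponents $p,p'$ against $u\,dx$, closing the estimate with the $A_p(u)$ condition. Morally this is the weighted analogue of the classical proof that $A_p$ is the correct condition for embedding $L^p(w)$ locally into $L^1$, with the base measure $u\,dx$ in place of Lebesgue measure.

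Concretely, fix a cube $Q$ and set $\lambda=\|f\|_{[v]_{A_p(u)}\Phi^p(L)(uv),Q}$. Because $\Phi$ is a Young function and $p\ge 1$, the map $t\mapsto c\,\Phi(t)^p$ is again a Young function for any $c>0$, so by the definition of the Luxemburg norm,
\[
\int_Q \Phi\!\left(\frac{|f|}{\lambda}\right)^{\!p} uv \;\leq\; \frac{(uv)(Q)}{[v]_{A_p(u)}}.
\]
The goal is to show $\tfrac{1}{u(Q)}\int_Q \Phi(|f|/\lambda)\,u\le 1$, which by definition gives $\|f\|_{\Phi(u),Q}\le\lambda$. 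Writing $\Phi(|f|/\lambda)=\bigl(\Phi(|f|/\lambda)v^{1/p}\bigr)\cdot v^{-1/p}$ and applying H\"older against $u\,dx$ with exponents $p,p'$ produces
\[
\int_Q \Phi(|f|/\lambda)\,u \;\leq\; \left(\int_Q \Phi(|f|/\lambda)^{p}\,uv\right)^{\!1/p}\!\left(\int_Q v^{-1/(p-1)}\,u\right)^{\!1/p'}.
\]
For the second factor I invoke the $A_p(u)$ definition, rewritten as
\[
\int_Q v^{-1/(p-1)}\,u \;\leq\; \left(\frac{[v]_{A_p(u)}\,u(Q)}{(uv)(Q)}\right)^{\!1/(p-1)}\!u(Q).
\]
Multiplying the two upper bounds, the factors $(uv)(Q)$ and $[v]_{A_p(u)}$ cancel exactly, and a routine bookkeeping of the exponents $1/p$ and $(p-1)/p=1/p'$ leaves $u(Q)$ on the right. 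Dividing by $u(Q)$ yields $1$, as required.

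No step presents a genuine obstacle: the $A_p(u)$ condition has been engineered precisely so that the two residual factors produced by H\"older's inequality cancel. The only cosmetic verification is that $c\,\Phi^p$ is itself a Young function, which is immediate from the convexity and monotonicity of $t\mapsto t^p$ for $p\ge 1$ and the fact that $\Phi(t)/t\to\infty$ forces $\Phi(t)^p/t\to\infty$.
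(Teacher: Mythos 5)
Your proof is correct, and it is the expected argument: the paper states this lemma without proof (deferring to the Orlicz-space references), so there is nothing to compare against, but your route --- unpack the Luxemburg norm of the right-hand side, insert $v^{1/p}v^{-1/p}$ and apply H\"older with exponents $p,p'$ against $u\,dx$, then close with the $A_p(u)$ condition --- is exactly how this change-of-weight estimate is meant to be verified, and the exponent bookkeeping you describe does cancel to leave precisely $u(Q)$. Two minor points worth being explicit about: the argument as written requires $p>1$ (for $p=1$ one would instead use $M_u v\le [v]_{A_1(u)}v$ pointwise, but $1<p<\infty$ is the only case the paper invokes), and taking $\lambda$ equal to the infimum in the Luxemburg norm requires either a monotone-convergence remark or working with $\lambda+\varepsilon$ and letting $\varepsilon\to 0$; both are routine.
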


We remit to \cite{O,RR} for more information about Young functions
and Orlicz spaces.

\section{\label{sec:Proofs}Proofs of the main results }

\subsection{Scheme of the proofs}

Before we provide the needed lemmata and the proofs of the main results
we would like to briefly outline the scheme that we are going to follow
for each of the proofs of the estimates in the main results that,
as we mentioned in the introduction, can be traced back to \cite{DSLR,LORRArxiv,LiPRR}.
Let $T$ a linear operator, possibly a sparse operator and let $\tilde{M}_{uv}f$
a dyadic, in some sense, maximal operator such that
\[
\]
\[
uv\left(\left\{ x\in\mathbb{R}^{d}\,:\,|\tilde{M}_{uv}f(x)|>t\right\} \right)\leq\frac{1}{t}\int A\left(\frac{|f|}{t}\right)uv
\]
where $A$ is a Young function. First, notice that 
\[
\begin{split}uv\left(\left\{ x\in\mathbb{R}^{d}\,:\,\frac{|T(fv)(x)|}{v}>1\right\} \right) & =uv\left(\left\{ x\in\mathbb{R}^{d}\,:\,\frac{|T(fv)(x)|}{v}>1,\tilde{M}_{uv}f(x)\leq\frac{1}{2}\right\} \right)\\
 & +uv\left(\left\{ x\in\mathbb{R}^{d}\,:\,|\tilde{M}_{uv}f(x)|>\frac{1}{2}\right\} \right).
\end{split}
\]
Since the desired estimate holds for the second term it suffices to
control the first one. Let us call 
\[
G=\left\{ x\in\mathbb{R}^{d}\,:\,\frac{|T(fv)(x)|}{v}>1,\tilde{M}_{uv}f(x)\leq\frac{1}{2}\right\} .
\]
Then it suffices to prove
\begin{equation}
uv\left(\left\{ x\in\mathbb{R}^{d}\,:\,\frac{|T(fv)(x)|}{v}>1,\tilde{M}_{uv}f(x)\leq\frac{1}{2}\right\} \right)\leq c_{n,T}\kappa_{u,v}\int A\left(|f|\right)uv+\frac{1}{2}uv(G).\label{eq:Red}
\end{equation}
This yields 
\[
uv\left(\left\{ x\in\mathbb{R}^{d}\,:\,\frac{|T(fv)(x)|}{v}>1,\tilde{M}_{uv}f(x)\leq\frac{1}{2}\right\} \right)\leq2c_{n,T}\kappa_{u,v}\int A\left(|f|\right)uv
\]
and consequently 
\[
uv\left(\left\{ x\in\mathbb{R}^{d}\,:\,\frac{|T(fv)(x)|}{v}>1\right\} \right)\leq2c_{n,T}\kappa_{u,v}\int A\left(|f|\right)uv
\]
which by homogeneity allows us to end up the proof.

The purpose of the following sections we will be settling \eqref{eq:Red}
for the operators in the main theorems. To achieve in that task we
will rely upon sparse domination results, and more in particular we
will use suitable splittings of the sparse families involved in the
spirit of \cite{DSLR,LORRArxiv,LiPRR}.

\subsection{Lemmatta}

Before starting with the proofs of the main results we provide some
technical lemmas.
\begin{lem}
\label{Lem:DoubleSum}Let $\gamma_{1},\gamma_{2}>1$. For every $j,k$
non negative integers let 
\[
\alpha_{k,j}=\min\{\gamma_{1}2^{-k}j^{\rho_{1}},\beta\gamma_{2}2^{-j}2^{-k}2^{\delta k}k^{\rho_{2}}\},
\]
where $\rho_{1},\rho_{2},\delta\geq0$. Then
\[
\sum_{j,k\geq0}\alpha_{k,j}\leq c_{\rho_{1},\rho_{2},\gamma,\delta}\gamma_{1}\log_{2}\left(e+\gamma_{2}\right)^{1+\rho_{1}}+\frac{1}{2\gamma}\beta,
\]
where $\gamma\geq1$.
\end{lem}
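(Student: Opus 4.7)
The plan is a two-region split in the variable $j$. For each fixed $k\ge 0$ I would introduce an integer cutoff $J_k$; for $j\le J_k$ I take the first entry of the $\min$, and for $j>J_k$ I take the second. The cutoff has to be large enough in terms of $\gamma_2$ to absorb the $\gamma_2$ factor sitting inside the exponential tail, and its growth in $k$ has to be fast enough to overcome the $2^{\delta k}$ factor and leave a summable geometric series whose total lies below $1/(2\gamma)$. A natural choice is therefore
\[
J_k \;=\; \bigl\lceil \log_2(e+\gamma_2)\bigr\rceil \;+\; \bigl\lceil Ck\bigr\rceil,
\]
with a constant $C=C(\rho_{2},\delta,\gamma)$ that I would pin down only at the very end.

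For the low-$j$ region I would use $\alpha_{k,j}\le \gamma_1 2^{-k}j^{\rho_1}$. The inner sum gives $\sum_{j=0}^{J_k}j^{\rho_1}\le c_{\rho_1}(J_k+1)^{1+\rho_1}$, and then the elementary inequality $(a+b)^{1+\rho_1}\le c_{\rho_1}(a^{1+\rho_1}+b^{1+\rho_1})$ splits this cleanly into the $\log_2(e+\gamma_2)^{1+\rho_1}$ piece and the $(Ck+1)^{1+\rho_1}$ piece. Summing in $k$ against the weight $2^{-k}$, the first piece produces the target $c\,\gamma_1\log_2(e+\gamma_2)^{1+\rho_1}$, while the second converges to a constant $c_{\rho_1,C}\,\gamma_1$ which I would absorb into the leading term, using $\gamma_2>1$ so that $\log_2(e+\gamma_2)^{1+\rho_1}$ is bounded below.

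For the high-$j$ region I would use $\alpha_{k,j}\le \beta\gamma_2 2^{-j}2^{-k}2^{\delta k}k^{\rho_2}$. The geometric sum in $j$ contributes $\sum_{j>J_k}2^{-j}=2^{-J_k}$, and by the choice of $J_k$ the factor $\gamma_2\cdot 2^{-J_k}$ is at most $2^{-Ck}$, so that all dependence in $\gamma_2$ disappears. What remains is
\[
\beta\sum_{k\ge 0} k^{\rho_2}\, 2^{-(1+C-\delta)k},
\]
and taking $C$ sufficiently large in terms of $\rho_2,\delta,\gamma$ forces this tail to be at most $\beta/(2\gamma)$. Combining the two regions then yields the claim, and the constant $c_{\rho_1,\rho_2,\gamma,\delta}$ in the statement arises precisely from this final choice of $C$.

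The hard part will be the joint calibration of $J_k$: the slope $C$ of the linear-in-$k$ slack $Ck$ has to simultaneously deliver the prescribed factor $1/(2\gamma)$ in the tail sum and contribute only a $C$-dependent (hence harmless) overhead in the low-$j$ sum. Writing both bounds out as explicit functions of $C$ first, and only then picking $C$ large enough, is what reconciles these two competing requirements.
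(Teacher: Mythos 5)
Your two-region split in $j$, with a cutoff $J_k$ consisting of a $\log_2(e+\gamma_2)$ offset plus a linear-in-$k$ slack, is exactly the decomposition the paper uses, and your treatment of the low-$j$ region (including absorbing the $C$-dependent overhead into the leading term via $\gamma_2>1$) is correct. The genuine gap is in the high-$j$ calibration: the claim that taking the slope $C$ large enough in terms of $\rho_2,\delta,\gamma$ forces the tail $\beta\sum_{k\ge0}k^{\rho_2}2^{-(1+C-\delta)k}$ below $\beta/(2\gamma)$ fails, because the $k=0$ term is completely insensitive to $C$. Concretely, when $\rho_2=0$ (the case actually invoked for Calder\'on--Zygmund and rough operators), the $k=0$ contribution of the high-$j$ region is $\sum_{j>J_0}\beta\gamma_2 2^{-j}=\beta\gamma_2 2^{-J_0}$ with $J_0=\lceil\log_2(e+\gamma_2)\rceil$, hence at least $\tfrac{1}{2}\beta\gamma_2/(e+\gamma_2)$; for large $\gamma_2$ this is essentially $\beta/2$ and already exceeds $\beta/(2\gamma)$ even for $\gamma=1$, let alone for $\gamma=3^{n}$ as needed in the rough singular integral argument. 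No choice of $C$ repairs this, since $C$ only dampens the terms with $k\ge1$.

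The fix is small and is precisely what the paper does: the $\gamma$-dependence (and a further absolute constant) must be loaded into the additive offset of the cutoff rather than only into the slope, i.e.\ one takes $J_k=\lceil\log_2\left((e+\gamma_2)8\gamma\right)\rceil+\left(\lceil\delta+\rho_2\rceil+1\right)k$. Then $\gamma_2 2^{-J_k}\le\frac{1}{8\gamma}2^{-(\lceil\delta+\rho_2\rceil+1)k}$ uniformly in $k\ge0$, the remaining sum over $k$ is at most an absolute constant, and the whole tail is $\le\beta/(2\gamma)$; the only cost in the low-$j$ region is replacing $\log_2(e+\gamma_2)$ by $\log_2\left((e+\gamma_2)8\gamma\right)$, which contributes only a factor depending on $\gamma$ and is absorbed into $c_{\rho_1,\rho_2,\gamma,\delta}$. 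With this single correction your argument coincides with the paper's proof.
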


\begin{proof}
We start writing
\[
\sum_{j,k\geq0}\alpha_{k,j}=\sum_{j\geq\left\lceil \log_{2}\left((e+\gamma_{2})8\gamma\right)\right\rceil +\left(\left\lceil \delta+\rho_{2}\right\rceil +1\right)k}\alpha_{k,j}+\sum_{j<\left\lceil \log_{2}\left((e+\gamma_{2})8\gamma\right)\right\rceil +\left(\left\lceil \delta+\rho_{2}\right\rceil +1\right)k}\alpha_{k,j}
\]
For the first term, notice that 
\[
\begin{split} & \sum_{j\geq\left\lceil \log_{2}\left((e+\gamma_{2})\delta\right)\right\rceil +\left(\left\lceil \delta\right\rceil +1\right)k}\alpha_{k,j}\\
 & \leq\beta\gamma_{2}\sum_{k=0}^{\infty}2^{-k}2^{\delta k}k^{\rho_{2}}\sum_{j\geq\left\lceil \log_{2}\left((e+\gamma_{2})8\right)\right\rceil +\left(\left\lceil \delta+\rho_{2}\right\rceil +1\right)k}2^{-j}\\
 & =\beta\gamma_{2}\sum_{k=0}^{\infty}2^{-k}2^{\delta k}k^{\rho_{2}}2^{-\left\lceil \log_{2}\left((e+\gamma_{2})8\right)\right\rceil -\left(\left\lceil \delta+\rho_{2}\right\rceil +1\right)k}\\
 & \leq\frac{\beta\gamma_{2}}{(e+\gamma_{2})8\gamma}\sum_{k=0}^{\infty}2^{-k-\rho_{2}}k^{\rho_{2}}\\
 & \leq\frac{\beta\gamma_{2}}{(e+\gamma_{2})8\gamma}\sum_{k=0}^{\infty}2^{-(1+\rho_{2})k}2^{\rho_{2}\log k}\\
 & \leq\frac{\beta\gamma_{2}}{(e+\gamma_{2})8\gamma}\sum_{k=0}^{\infty}2^{-k}\\
 & \leq\frac{2\gamma_{2}\beta}{(e+\gamma_{2})8\gamma}\\
 & \leq\frac{\gamma_{2}}{(e+\gamma_{2})4\gamma}\beta\leq\frac{1}{2\gamma}\beta
\end{split}
\]
For the second term, we observe that
\[
\begin{split} & \sum_{j<\left\lceil \log_{2}\left((e+\gamma_{2})8\gamma\right)\right\rceil +\left(\left\lceil \delta+\rho_{2}\right\rceil +1\right)k}\alpha_{k,j}\\
 & \leq\gamma_{1}\sum_{k=0}^{\infty}2^{-k}\sum_{1\leq j<\left\lceil \log_{2}\left((e+\gamma_{2})8\gamma\right)\right\rceil +\left(\left\lceil \delta+\rho_{2}\right\rceil +1\right)k}j^{\rho_{1}}\\
 & \leq\gamma_{1}\sum_{k=0}^{\infty}\left(\left\lceil \log_{2}\left((e+\gamma_{2})8\gamma\right)\right\rceil +\left(\left\lceil \delta+\rho_{2}\right\rceil +1\right)k\right)^{1+\rho_{1}}2^{-k}\\
 & \leq c2\left(\delta+\rho_{2}\right)\gamma_{1}\log_{2}\left((e+\gamma_{2})8\gamma\right)^{1+\rho_{1}}\\
 & \leq c_{\rho_{1},\rho_{2},\gamma,\delta}\gamma_{1}\log\left(e+\gamma_{2}\right)^{1+\rho_{1}}
\end{split}
\]
and we are done.
\end{proof}
The second result we will rely upon is the following.
\begin{lem}
\label{Lem:DisEQ}Let $A$ a submultiplicative Young function and
$\mathcal{S}$ a $\frac{A(8)}{1+A(8)}$-sparse family. Let $f\in\mathcal{C}_{c}^{\infty}$
and $w\in A_{\infty}$ and assume that for every $Q\in\mathcal{S}$
\[
2^{-j-1}\leq\langle f\rangle_{A(L)(w)Q}\leq2^{-j}.
\]
Then for every $Q\in\mathcal{S}$ there exists $\tilde{E}_{Q}\subseteq Q$
such that 
\[
\sum_{Q\in\mathcal{S}}\chi_{\tilde{E_{Q}}}(x)\leq c_{n}[w]_{A_{\infty}}
\]
and 
\[
w(Q)\|f\|_{A(w),Q}\leq4\frac{A(2^{j+2})}{2^{j+2}}\int_{\tilde{E_{Q}}}A\left(|f|\right)w.
\]
 
\end{lem}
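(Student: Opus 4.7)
First I would convert the hypothesis on the Luxemburg average into a pointwise integral inequality. Using the Luxemburg infimum definition with $\lambda=2^{-j-2}<2^{-j-1}$, the hypothesis $\|f\|_{A(w),Q}\ge 2^{-j-1}$ forces
\[
\int_Q A(2^{j+2}|f|)\,w \;\geq\; w(Q),\qquad Q\in\mathcal S.
\]
Submultiplicativity $A(2^{j+2}|f|)\le A(2^{j+2})A(|f|)$ then yields the clean bound $\int_Q A(|f|)\,w\ge w(Q)/A(2^{j+2})$. Combined with the upper bound $\|f\|_{A(w),Q}\le 2^{-j}$, this already gives the target inequality with constant $4$ whenever $\tilde E_Q=Q$:
\[
w(Q)\|f\|_{A(w),Q}\le 2^{-j}A(2^{j+2})\int_Q A(|f|)\,w=\frac{4\,A(2^{j+2})}{2^{j+2}}\int_Q A(|f|)\,w.
\]

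The remaining task is to shrink $Q$ to some $\tilde E_Q\subseteq Q$ so that the pointwise overlap $\sum_{Q}\chi_{\tilde E_Q}$ is bounded by $c_n[w]_{A_\infty}$, while only a harmless fraction of $\int_Q A(|f|)\,w$ is lost. In fact, under mild continuity of $A$ the stronger inequality $\int_Q A(2^{j+1}|f|)\,w\ge w(Q)$ already appears in the first step, sharpening the constant in the display above to $2$ and leaving a factor-of-$2$ slack. We may therefore afford to lose up to half of $\int_Q A(|f|)\,w$ while still recovering the claimed constant $4$.

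To realize $\tilde E_Q$ I would run a stopping-time construction inside $\mathcal S$: for each $Q\in\mathcal S$, remove from $Q$ the maximal descendants $Q''\in\mathcal S$, $Q''\subsetneq Q$, whose relative $w$-mass $w(Q'')/w(Q)$ exceeds a threshold of order $1/[w]_{A_\infty}$. The resulting sets $\tilde E_Q$ have pointwise overlap at most $c_n[w]_{A_\infty}$, by the Fujii--Wilson characterization $[w]_{A_\infty}=\sup_P\frac{1}{w(P)}\int_P M(\chi_P w)$. On the other hand, the precise Lebesgue-sparsity constant $\eta=A(8)/(1+A(8))$ combined with the reverse Hölder inequality of the preceding subsection is calibrated so that the $A(|f|)w$-mass removed by this procedure is at most half of $\int_Q A(|f|)\,w$, consuming exactly the slack identified above and preserving the constant $4$. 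This delicate matching---simultaneously achieving the pointwise overlap bound $c_n[w]_{A_\infty}$ and the mass retention $\int_{\tilde E_Q}A(|f|)\,w\ge \tfrac12\int_Q A(|f|)\,w$---is the main obstacle of the proof; everything else reduces to the Luxemburg--Orlicz bookkeeping and the submultiplicativity of $A$ already laid out.
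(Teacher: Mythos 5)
Your opening reduction is fine and matches the paper's bookkeeping: from $\|f\|_{A(w),Q}\geq 2^{-j-1}$ and the Luxemburg definition one gets $\int_Q A(2^{j+2}|f|)w\geq w(Q)$, and submultiplicativity plus the upper bound $\|f\|_{A(w),Q}\leq 2^{-j}$ gives the inequality with $\tilde E_Q=Q$. But the heart of the lemma is the construction of $\tilde E_Q$, and there you have a genuine gap on two counts. First, the stopping time you propose --- remove from $Q$ the maximal descendants $Q''\in\mathcal S$ with $w(Q'')/w(Q)$ above a threshold $\varepsilon\sim 1/[w]_{A_\infty}$ --- does not give bounded overlap. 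Take $w=1$ and a sparse chain $Q_0\supsetneq Q_1\supsetneq\cdots$ with $|Q_{i+1}|/|Q_i|$ just below $\varepsilon$: nothing is ever removed, $\tilde E_{Q_i}=Q_i$ for all $i$, and the overlap at a point of $\bigcap_i Q_i$ is the length of the chain, which is unbounded; it also fails to control the mass lost, since a single removed child may carry almost all of $w(Q)$. The correct construction (the one the paper uses) is combinatorial in the opposite sense: stratify $\mathcal S$ into generations $\mathcal S^0,\mathcal S^1,\dots$ of successively maximal cubes and set $\tilde E_Q=Q\setminus\bigcup\{P\in\mathcal S^{i+\nu}: P\subset Q\}$ with $\nu=\lceil c_n[w]_{A_\infty}\rceil$. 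The overlap is then at most $\nu$ simply because the cubes of $\mathcal S$ through a fixed point form a chain with one cube per generation; the $A_\infty$ condition enters only to show, via $w(E)\leq 2(|E|/|Q|)^{1/(c_n[w]_{A_\infty})}w(Q)$ and the sparsity constant $A(8)/(1+A(8))$, that the removed set has $w$-measure at most $w(Q)/A(8)$.

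Second, even granting a correct construction, your plan to retain half of $\int_Q A(|f|)w$ cannot work uniformly in $j$. Your only lower bound on the full integral is $\int_Q A(|f|)w\geq w(Q)/A(2^{j+1})$, while the removed set can carry $w$-measure as large as $w(Q)/A(8)$, hence $A(|f|)w$-mass up to $w(Q)/A(8)$; for large $j$ this dwarfs the whole integral, so $\int_{\tilde E_Q}A(|f|)w\geq\tfrac12\int_Q A(|f|)w$ is simply false in general. The paper sidesteps this by never comparing masses of $A(|f|)w$: it keeps the factor $2^{j+2}$ inside $A$ on the removed cubes, bounds $\sum_P\int_P A(2^{j+2}|f|)w\leq A(4)\sum_P w(P)\leq \tfrac14 w(Q)$ using the upper bound $\|f\|_{A(w),P}\leq 2^{-j}$ on each $P\in\mathcal S$, and absorbs the resulting term $2^{-j-2}\cdot\tfrac14 w(Q)\leq\tfrac14 w(Q)\|f\|_{A(w),Q}$ into the left-hand side; submultiplicativity is applied only on $\tilde E_Q$. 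You need both this absorption step and the generational stopping time to close the argument.
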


\begin{proof}
We split the family $\mathcal{S}$ in the following way
\[
\begin{split}\mathcal{S}^{0} & =\{\text{Maximal in }\mathcal{S}\}\\
\mathcal{S}^{1} & =\{\text{Maximal in }\mathcal{S}\setminus\mathcal{S}^{0}\}\\
 & \dots\\
\mathcal{S}^{i} & =\{\text{Maximal in }\mathcal{S}\setminus\cup_{r=0}^{i-1}\mathcal{S}^{r}\}
\end{split}
\]
Note that since $w\in A_{\infty}$ we have that, for each cube $Q$
and each measurable subset $E\subset Q$, 
\[
w(E)\leq2\left(\frac{|E|}{|Q|}\right)^{\frac{1}{c_{n}[w]_{A_{\infty}}}}w(Q)
\]
In particular if $Q\in\mathcal{S}^{i}$ and $J_{1}=\bigcup_{P\in\mathcal{S}^{i+1},\,P\subset Q}P$
then 
\[
|J_{1}|=\left|\bigcup_{P\in\mathcal{S}^{i+1},\,P\subset Q}P\right|\leq\left(\frac{1+A(4)}{A(4)}-1\right)|Q|=\frac{1}{A(4)}|Q|.
\]
And this yields
\[
w(J_{1})\leq\left(\frac{1}{A(8)}\right)^{\frac{1}{c_{n}[w]_{A_{\infty}}}}w(Q).
\]
Furthermore, arguing by induction, if we denote $J_{\nu}=\bigcup_{P\in\mathcal{S}^{i+\nu},\,P\subset Q}P$
\[
w(J_{\nu})\leq\left(\frac{1}{A(8)}\right)^{\frac{\nu}{c_{n}[w]_{A_{\infty}}}}w(Q)
\]
And in particular if we choose $\nu=\left\lceil c_{n}[w]_{A_{\infty}}\right\rceil $,
then 
\[
w(J_{\nu})\leq\frac{1}{A(8)}w(Q)
\]
 Let $Q\in\mathcal{S}^{i}$. and let $\tilde{E_{Q}}=Q\setminus\bigcup_{P\in\mathcal{S}^{i+\left\lceil c_{n}[w]_{A_{\infty}}\right\rceil }}P$.
\[
\begin{split} & w(Q)\|f\|_{A(w),Q}\\
 & \leq w(Q)\left\{ 2^{-j-2}+\frac{2^{-j-2}}{w(Q)}\int_{Q}A\left(2^{j+2}|f|\right)w\right\} \\
 & \leq w(Q)2^{-j-2}+\frac{1}{2^{j+2}}\int_{Q}A\left(2^{j+2}|f|\right)w\\
 & \leq w(Q)2^{-j-1}+\frac{1}{2^{j+2}}\int_{\tilde{E_{Q}}}A\left(2^{j+2}|f|\right)w+\frac{1}{2^{j+2}}\sum_{P\in\mathcal{S}_{j,k}^{i+\left\lceil c_{n}[w]_{A_{\infty}}\right\rceil }}\int_{P}A\left(2^{j+2}|f|\right)w\\
 & \leq w(Q)2^{-j-2}+\frac{A(2^{j+2})}{2^{j+2}}\int_{\tilde{E_{Q}}}A\left(|f|\right)w+\frac{1}{2^{j+2}}\sum_{P\in\mathcal{S}_{j,k}^{i+\left\lceil c_{n}[w]_{A_{\infty}}\right\rceil }}\int_{P}A\left(2^{j+2}|f|\right)w
\end{split}
\]
Observe that we can bound the last term as follows
\[
\begin{split} & \sum_{P\in\mathcal{S}_{j,k}^{i+\left\lceil c_{n}[w]_{A_{\infty}}\right\rceil }}\int_{P}A\left(2^{j+2}|f|\right)w\\
 & \leq A(4)\sum_{P\in\mathcal{S}_{j,k}^{i+\left\lceil c_{n}[w]_{A_{\infty}}\right\rceil }}w(P)\frac{1}{w(P)}\int_{P}A\left(2^{j}|f|\right)w\\
 & \leq A(4)\sum_{P\in\mathcal{S}_{j,k}^{i+\left\lceil c_{n}[uv]_{A_{\infty}}\right\rceil }}w(P)\\
 & \leq\frac{A(4)}{2}\frac{1}{A(8)}w(Q)\leq\frac{1}{4}w(Q)
\end{split}
\]
Hence
\[
\begin{split}w(Q)\|f\|_{A(w),Q} & \leq\frac{1}{2^{j+2}}w(Q)+\frac{A(2^{j+2})}{2^{j+2}}\int_{\tilde{E_{Q}}}A\left(|f|\right)w+\frac{1}{2^{j+2}}\frac{1}{4}w(Q)\\
 & \leq\left(\frac{1}{2}+\frac{1}{4}\right)w(Q)\|f\|_{A(w),Q}+\frac{A(2^{j+2})}{2^{j+2}}\int_{\tilde{E_{Q}}}A\left(|f|\right)w\\
 & =\frac{3}{4}w(Q)\|f\|_{A(w),Q}+\frac{A(2^{j+2})}{2^{j+2}}\int_{\tilde{E_{Q}}}A\left(|f|\right)w,
\end{split}
\]
from which readily follows the desired conclusion. 
\end{proof}
The following lemma will be also used repeatedly.
\begin{lem}
\label{Lem:UnQ}Let $w\in A_{\infty}$ and $\mathcal{S}$ a $\eta$-sparse
family of cubes. Then

\[
\sum_{Q\in\mathcal{S}}w(Q)\leq c_{n}[w]_{A_{\infty}}w\left(\bigcup_{Q\in\mathcal{S}}Q\right).
\]
\end{lem}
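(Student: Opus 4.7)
The plan is to exploit the Fujii--Wilson characterization of the $A_\infty$ constant together with the sparseness of $\mathcal{S}$, after organizing $\mathcal{S}$ under its maximal elements. We may assume that $\mathcal{S}$ is contained in a single dyadic lattice $\mathcal{D}$ (this is the setting in which sparse families arise in this paper; otherwise one invokes Lemma \ref{Lem:3ndlt} at the cost of an extra dimensional factor absorbed into $c_n$).

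First, let $\{Q_j\}_j$ denote the maximal cubes of $\mathcal{S}$ with respect to inclusion. These are pairwise disjoint and
\[
\bigcup_{Q\in\mathcal{S}} Q \;=\; \bigsqcup_j Q_j,
\]
so it suffices to establish, for each $j$ separately,
\[
\sum_{\substack{Q\in\mathcal{S}\\ Q\subset Q_j}} w(Q) \;\leq\; c_n [w]_{A_\infty}\, w(Q_j),
\]
and then sum in $j$.

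Second, I would use the sparseness condition $|E_Q|\geq \eta|Q|$ to pass from a sum of $w$-masses of cubes to an integral against the dyadic maximal function. Concretely, for every $Q\in\mathcal{S}$ with $Q\subset Q_j$ and every $x\in E_Q\subset Q$ one has $M^{d}(\chi_{Q_j}w)(x) \geq \tfrac{w(Q)}{|Q|}$, where $M^d$ denotes the dyadic maximal operator associated to the dyadic grid $\mathcal{D}(Q_j)$. Using $|Q|\leq \eta^{-1}|E_Q|$ together with the pairwise disjointness of the sets $E_Q$, one gets
\[
\sum_{\substack{Q\in\mathcal{S}\\ Q\subset Q_j}} w(Q) \;\leq\; \frac{1}{\eta}\sum_{\substack{Q\in\mathcal{S}\\ Q\subset Q_j}} \frac{w(Q)}{|Q|}\,|E_Q| \;\leq\; \frac{1}{\eta}\int_{Q_j} M^{d}(\chi_{Q_j}w)(x)\,dx \;\leq\; \frac{1}{\eta}\int_{Q_j} M(\chi_{Q_j}w)(x)\,dx.
\]

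Third, I would close the argument by invoking the Fujii--Wilson definition of the $A_\infty$ constant, which bounds the last integral by $[w]_{A_\infty}\,w(Q_j)$. Summing over $j$ and using disjointness of the $Q_j$ yields the claim with $c_n = 1/\eta$ (or $3^n/\eta$ if the $3^n$-grid reduction is invoked). There is no real obstacle here: the only mild subtlety is the reduction to the dyadic setting, but since the sparse families appearing in this paper are produced by sparse domination theorems that already deliver them inside a dyadic lattice, this is essentially cost-free.
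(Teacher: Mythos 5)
Your argument is correct in substance and is essentially the paper's proof: reduce to the maximal cubes, use $|E_Q|\geq\eta|Q|$ and the pointwise bound $\frac{w(Q)}{|Q|}\leq\inf_{z\in Q}M(\chi_{Q_j}w)(z)$ to majorize the sum by $\frac{1}{\eta}\int_{Q_j}M(\chi_{Q_j}w)$, and close with the Fujii--Wilson constant. Two small remarks: an infinite sparse family need not possess maximal cubes (think of an increasing chain), which the paper handles by exhausting $\mathcal{S}$ with an increasing sequence of finite subfamilies $\mathcal{S}_k$, running the argument on each, and letting $k\to\infty$; and your reduction to a dyadic lattice is superfluous, since you pass to the ordinary maximal function $M(\chi_{Q_j}w)$ before invoking $[w]_{A_\infty}$, so the argument works verbatim for an arbitrary sparse family of cubes with $c_n=1/\eta$.
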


\begin{proof}
We can assume that $\mathcal{S}=\bigcup_{k=0}^{\infty}\mathcal{S}_{k}$
where $\{\mathcal{S}_{k}\}$ is an increasing sequence of finite sparse
families. Now we fix $k$ and consider $\mathcal{S}_{k}^{*}$ the
family of maximal cubes of $\mathcal{S}_{k}$ with respect to the
inclusion. Then 
\[
\sum_{Q\in\mathcal{S}_{k}}w(Q)=\sum_{Q\in\mathcal{S}_{k}^{*}}\sum_{P\in\mathcal{S}_{k},\,P\subset Q}w(P).
\]
Notice that 
\[
\begin{split}\sum_{P\in\mathcal{S}_{k},\,P\subset Q}w(P) & \leq\frac{1}{\eta}\sum_{P\in\mathcal{S}_{k},\,P\subset Q}\frac{1}{|P|}w(P)|E_{P}|\leq\frac{1}{\eta}\sum_{P\in\mathcal{S}_{k},\,P\subset Q}\frac{1}{|P|}w(P)|E_{P}|\\
 & \leq\frac{1}{\eta}\sum_{P\in\mathcal{S}_{k},\,P\subset Q}\inf_{z\in P}M(w\chi_{Q})(z)|E_{P}|\leq\frac{1}{\eta}\sum_{P\in\mathcal{S}_{k},\,P\subset Q}\int_{E_{P}}M(w\chi_{Q})\\
 & \leq\frac{1}{\eta}\int_{Q}M(w\chi_{Q})=\frac{1}{\eta}\frac{1}{w(Q)}\int_{Q}M(w\chi_{Q})w(Q)\leq\frac{1}{\eta}[w]_{A_{\infty}}w(Q).
\end{split}
\]
Hence
\[
\begin{split}\sum_{Q\in\mathcal{S}^{*}}\sum_{P\in\mathcal{S}_{k},\,P\subset Q}w(P) & \leq\frac{1}{\eta}[w]_{A_{\infty}}\sum_{Q\in\mathcal{S}_{k}^{*}}w(Q)=\frac{1}{\eta}[w]_{A_{\infty}}w\left(\bigcup_{Q\in\mathcal{S}_{k}^{*}}Q\right)\\
 & =\frac{1}{\eta}[w]_{A_{\infty}}w\left(\bigcup_{Q\in\mathcal{S}_{k}^{*}}Q\right)=\frac{1}{\eta}[w]_{A_{\infty}}w\left(\bigcup_{Q\in\mathcal{S}_{k}}Q\right)\\
 & \leq\frac{1}{\eta}[w]_{A_{\infty}}w\left(\bigcup_{Q\in\mathcal{S}}Q\right).
\end{split}
\]
Consequently
\[
\sum_{Q\in\mathcal{S}_{k}}w(Q)\leq\frac{1}{\eta}[w]_{A_{\infty}}w\left(\bigcup_{Q\in\mathcal{S}}Q\right)
\]
and letting $k\rightarrow\infty$ we are done.
\end{proof}
To end the section we provide some results related to singular weighted
maximal functions.
\begin{lem}
\label{Lem:MUnDyad}Let $A$ a Young function such that $A(st)\leq\kappa A(s)A(t)$.
Let $\mathcal{D}_{j}$ $j=1,\dots,k$ be dyadic grids and let $w$
a weight. Then
\[
w\left(\left\{ x\in\mathbb{R}^{n}:M_{A(w)}^{\mathcal{F}}f(x)>t\right\} \right)\leq\kappa c_{n}\int_{\mathbb{R}^{d}}A\left(\frac{|f(x)|}{t}\right)w(x)dx
\]
where $\mathcal{F}=\bigcup_{j=1}^{3^{n}}\mathcal{D}_{j}$.
\end{lem}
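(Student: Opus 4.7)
The plan is to reduce to a single dyadic grid by subadditivity and then apply the standard Calderón-Zygmund stopping-cube argument, tailored to the Luxemburg norm. Writing
\[
M_{A(w)}^{\mathcal{F}}f(x)=\max_{1\le j\le 3^{n}}M_{A(w)}^{\mathcal{D}_{j}}f(x),
\]
the level set factorises as
\[
\{x:M_{A(w)}^{\mathcal{F}}f(x)>t\}=\bigcup_{j=1}^{3^{n}}\{x:M_{A(w)}^{\mathcal{D}_{j}}f(x)>t\},
\]
so it suffices to prove the estimate for a single dyadic lattice with a constant independent of it; the factor $3^{n}$ is then absorbed into $c_{n}$.

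Fix a lattice $\mathcal{D}$ and set $E_{t}=\{x:M_{A(w)}^{\mathcal{D}}f(x)>t\}$. After the standard truncation of $f$, every $x\in E_{t}$ is contained in some $Q\in\mathcal{D}$ with $\|f\|_{A(w),Q}>t$, and the nestedness of $\mathcal{D}$ allows one to extract the pairwise disjoint maximal such cubes $\{Q_{i}\}$, which cover $E_{t}$. By the defining property of the Luxemburg norm, $\|f\|_{A(w),Q_{i}}>t$ forces the value $\lambda=t$ to fail the defining condition, so
\[
\frac{1}{w(Q_{i})}\int_{Q_{i}}A\!\left(\frac{|f|}{t}\right)w>1.
\]
Summing this inequality over $i$ using pairwise disjointness of the $Q_{i}$ yields
\[
w(E_{t})\le\sum_{i}w(Q_{i})\le\int_{\mathbb{R}^{n}}A\!\left(\frac{|f|}{t}\right)w,
\]
and combining the $3^{n}$ one-grid estimates gives the claim.

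The submultiplicativity constant $\kappa$ is only needed if one prefers to argue through the equivalent Amemiya-form norm recalled in Section \ref{sec:ApWeightsOrliczmaximal} (as is done in the proof of Lemma \ref{Lem:DisEQ}): there one tests the infimum with a $\mu$ proportional to $t$, which forces an inequality of the form $\int_{Q_{i}} A(c|f|/t)\,w>c'\,w(Q_{i})$, and then $A(c|f|/t)\le\kappa A(c)A(|f|/t)$ converts it back into the integral with $|f|/t$, producing the factor $\kappa$ in front. The only real technical point is the bookkeeping around the Luxemburg/Amemiya equivalence constants and the verification that the maximal stopping cubes exist; the dyadic combinatorics is routine.
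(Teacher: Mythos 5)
Your proof is correct, and it differs from the paper's in two ways worth noting. First, the paper does not use the identity $M_{A(w)}^{\mathcal{F}}f=\max_{j}M_{A(w)}^{\mathcal{D}_{j}}f$; it uses the cruder bound $M_{A(w)}^{\mathcal{F}}f\leq\sum_{j}M_{A(w)}^{\mathcal{D}_{j}}f$, which forces it to estimate each $w\bigl(\{M_{A(w)}^{\mathcal{D}_{j}}f>t/3^{n}\}\bigr)$ and then pull the factor $3^{n}$ out of the argument of $A$ via $A(3^{n}s)\leq\kappa A(3^{n})A(s)$ --- this is exactly where the submultiplicativity hypothesis and the constant $\kappa$ enter. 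Your max decomposition keeps the level at $t$ in every grid, so you never rescale inside $A$ and, as you correctly observe, you do not need $\kappa$ at all; your argument actually proves the slightly stronger bound $w(\{M_{A(w)}^{\mathcal{F}}f>t\})\leq 3^{n}\int A(|f|/t)\,w$ for an arbitrary Young function. Second, you prove the one-grid weak-type estimate directly by maximal stopping cubes, whereas the paper simply cites \cite[Section 15]{LN} for it; your argument is the standard one and is sound: if $\|f\|_{A(w),Q_{i}}>t$ then $t$ is inadmissible in the Luxemburg infimum, so $w(Q_{i})<\int_{Q_{i}}A(|f|/t)\,w$, and summing over the pairwise disjoint maximal cubes gives the claim. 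The only point requiring care, which you flag, is the existence of the maximal cubes: in a dyadic lattice containing arbitrarily large cubes one should either check that $\|f\|_{A(w),Q}\to 0$ as $Q$ exhausts $\mathbb{R}^{n}$ (true when $\int A(|f|/t)w<\infty$ and $w(\mathbb{R}^{n})=\infty$, and there is nothing to prove when that integral is infinite), or run the selection over finite subfamilies and pass to the limit. With that routine point settled, your proof is complete; the closing remarks about the Amemiya-form norm are unnecessary for your argument and can be dropped.
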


\begin{proof}
Let $t>0$. Notice that 
\[
M_{w}^{\mathcal{F}}f(x)\leq\sum_{j=1}^{3^{n}}M_{w}^{\mathcal{D}_{j}}f(x).
\]
Then taking into account that 
\[
w\left(\left\{ x\in\mathbb{R}^{n}:M_{w}^{\mathcal{D}}f(x)>\lambda\right\} \right)\leq\int_{\mathbb{R}^{d}}A\left(\frac{|f(x)|}{\lambda}\right)w(x)dx
\]
(see \cite[Section 15]{LN}) we have that
\[
\begin{split} & w\left(\left\{ x\in\mathbb{R}^{n}:M_{w}^{\mathcal{F}}f(x)>t\right\} \right)\\
 & \leq w\left(\left\{ x\in\mathbb{R}^{n}:\sum_{j=1}^{3^{n}}M_{w}^{\mathcal{D}_{j}}f(x)>t\right\} \right)\\
 & \leq\sum_{j=1}^{3^{n}}w\left(\left\{ x\in\mathbb{R}^{n}:M_{w}^{\mathcal{D}_{j}}f(x)>\frac{t}{3^{n}}\right\} \right)\\
 & \leq\sum_{j=1}^{3^{n}}\int_{\mathbb{R}^{d}}A\left(\frac{3^{n}|f(x)|}{t}\right)w(x)dx\\
 & \leq c_{n}\kappa\int_{\mathbb{R}^{d}}A\left(\frac{|f(x)|}{t}\right)w(x)dx
\end{split}
\]
and we are done.
\end{proof}

\subsection{Proof of Theorem \ref{Thm:1}}

\subsubsection{Calderón-Zygmund operators}

Using pointwise sparse domination it suffices to settle the result
for a sparse operator $A_{\mathcal{S}}$ where $\mathcal{S}$ is a
$\frac{8}{9}$-sparse family contained in a dyadic lattice $\mathcal{D}$.

Let $G=\{\frac{A_{\mathcal{S}}(fv)(x)}{v(x)}>1\}\setminus\{M_{uv}^{\mathcal{D}}(f)>\frac{1}{2}\}$
and assume that $\|f\|_{L^{1}(uv)}=1$. Then it suffices to prove
that 
\[
uv(G)\leq c_{n,p}[uv]_{A_{\infty}}[u]_{A_{1}}\log\left(e+[uv]_{A_{\infty}}[u]_{A_{1}}[v]_{A_{p}(u)}\right)+\frac{1}{2}uv(G).
\]
If we denote $g=\chi_{G}$ then 
\[
\begin{split}uv(G) & \lesssim c_{n}c_{T}\sum_{Q\in\mathcal{S}_{j}}\langle fv\rangle_{Q,1}\langle g\rangle_{Q,1}^{u}u(Q).\\
 & \leq c_{n}c_{T}[u]_{A_{1}}\sum_{Q\in\mathcal{S}}\langle f\rangle_{Q,1}^{uv}\langle g\rangle_{Q,1}^{u}uv(Q)
\end{split}
\]
and it suffices to prove that 
\[
\begin{split} & c_{n}c_{T}[u]_{A_{1}}\sum_{Q\in\mathcal{S}}\langle f\rangle_{Q,1}^{uv}\langle g\rangle_{Q,1}^{u}uv(Q)\\
 & \leq c_{n,p}[uv]_{A_{\infty}}[u]_{A_{1}}\log\left(e+[uv]_{A_{\infty}}[u]_{A_{1}}[v]_{A_{p}(u)}\right)+\frac{1}{2}uv(G).
\end{split}
\]
We split the sparse family as follows. Let $Q\in\mathcal{S}_{k,j}$,
$k,j\geq0$ if
\[
\begin{split}2^{-j-1} & <\langle f\rangle_{Q,1}^{uv}\leq2^{-j},\\
2^{-k-1} & <\langle g\rangle_{Q,1}^{u}\leq2^{-k}.
\end{split}
\]
Let us call
\[
s_{k,j}=\sum_{Q\in\mathcal{S}_{k,j}}\langle f\rangle_{Q,1}^{uv}\langle g\rangle_{Q,1}^{u}uv(Q).
\]
We claim that 
\[
s_{k,j}\leq\begin{cases}
c_{n}2^{-k}[uv]_{A_{\infty}},\\
c_{n,p}[uv]_{A_{\infty}}[v]_{A_{p}(u)}2^{-j}2^{k(p-1)}uv\left(G\right).
\end{cases}
\]
For the top estimate we argue as follows. Using Lemma \ref{Lem:DisEQ}
we have that there exist sets $\tilde{E}_{Q}\subset Q$ such that
\[
\sum_{Q\in\mathcal{S}_{k,j}}\chi_{\tilde{E_{Q}}}(x)\leq\left\lceil c_{n}[uv]_{A_{\infty}}\right\rceil 
\]
and
\[
\int_{Q}fuv\leq4\int_{\tilde{E_{Q}}}fuv.
\]
\[
\]
Then
\begin{equation}
\begin{split}s_{k,j} & \leq2^{-k}\sum_{Q\in\mathcal{S}_{j,k}}\int_{Q}fuv\\
 & \leq4\cdot2^{-k}\sum_{Q\in\mathcal{S}_{j,k}}\int_{\tilde{E_{Q}}}fuv\\
 & \leq c_{n}[uv]_{A_{\infty}}2^{-k}\int_{\mathbb{R}^{n}}fuv=c_{n}[uv]_{A_{\infty}}2^{-k}.
\end{split}
\label{eq:TopEst}
\end{equation}
For the lower estimate, using Lemma \ref{Lem:UnQ},
\[
\begin{split}s_{k,j} & \leq2^{-j}2^{-k}\sum_{Q\in\mathcal{S}_{j,k}}uv(Q)\\
 & \leq c_{n}[uv]_{A_{\infty}}2^{-j}2^{-k}uv\left(\bigcup_{Q\in\mathcal{S}_{j,k}}Q\right)\\
 & \leq c_{n}[uv]_{A_{\infty}}2^{-j}2^{-k}uv\left(\left\{ x\in\mathbb{R}^{n}\,:\,M_{u}g>2^{-k-1}\right\} \right).
\end{split}
\]
Now notice that since $v\in A_{p}(u)$, Lemma \ref{Lem:ContAvg} yields
\[
\frac{1}{u(Q)}\int_{Q}gu\leq\left(\frac{[v]_{A_{p}(u)}}{uv(Q)}\int_{Q}guv\right)^{\frac{1}{p}}.
\]
Taking that into account, by Lemma \ref{Lem:MUnDyad}
\[
\begin{split} & \leq c_{n}[uv]_{A_{\infty}}2^{-j}2^{-k}uv\left(\left\{ x\in\mathbb{R}^{n}\,:\,M_{u}g>2^{-k-1}\right\} \right)\\
 & \leq c_{n}[uv]_{A_{\infty}}2^{-j}2^{-k}uv\left(\left\{ x\in\mathbb{R}^{n}\,:\,M_{uv}g>2^{-kp-p}[v]_{A_{p}(u)}^{-1}\right\} \right)\\
 & \leq c_{n,p}[uv]_{A_{\infty}}[v]_{A_{p}(u)}2^{-j}2^{k(p-1)}uv\left(G\right).
\end{split}
\]
Combining the estimates above
\[
\begin{split}uv(G) & \leq c_{n}c_{T}[u]_{A_{1}}\sum_{k=0}^{\infty}\sum_{j=0}^{\infty}s_{k,j}\\
 & \leq\sum_{k=0}^{\infty}\sum_{j=0}^{\infty}\min\left\{ c_{n}c_{T}2^{-k}[u]_{A_{1}}[uv]_{A_{\infty}},c_{n,p}[u]_{A_{1}}[uv]_{A_{\infty}}[v]_{A_{p}(u)}2^{-j}2^{-k}2^{kp}uv\left(G\right)\right\} 
\end{split}
.
\]
Now we are left with estimating the double sum. Applying Lemma \ref{Lem:DoubleSum}
with 
\[
\begin{split}\gamma_{1} & =c_{n}c_{T}[u]_{A_{1}}[uv]_{A_{\infty}}\\
\gamma_{2} & =c_{n,p}[u]_{A_{1}}[uv]_{A_{\infty}}[v]_{A_{p}(u)}
\end{split}
\]
$\delta=p$, $\beta=uv(G)$, $\rho_{1}=\rho_{2}=0$ and $\gamma=1$
we are done.

\subsubsection{Commutators}

Using pointwise sparse domination it suffices to settle the result
for suitable dyadic operators. Let 
\[
G=\left\{ \frac{\sum_{Q\in\mathcal{S}}|b-b_{Q}|^{m-h}\chi_{Q}\frac{1}{|Q|}\int_{Q}|b-b_{Q}|^{h}fv}{v(x)}>1\right\} \setminus\left\{ M_{L(\log L)^{\frac{h}{r}}(uv)}^{\mathcal{D}}(f)>\frac{1}{2}\right\} .
\]
Assume that $\|b\|_{Osc_{\exp L^{r}}}=1$. It suffices to prove that
\[
uv(G)\leq c\varphi_{m,h}(u,v)\int_{\mathbb{R}^{n}}\Phi_{\frac{h}{r}}(|f(x)|)dx+\frac{1}{2}uv(G)
\]
where
\[
\varphi_{m,h}(u,v)=[u]_{A_{1}}[uv]_{A_{\infty}}^{1+\frac{h}{r}}[u]_{A_{\infty}}^{\frac{m-h}{r}}\log\left(e+[u]_{A_{1}}[uv]_{A_{\infty}}^{1+\frac{h}{r}}[u]_{A_{\infty}}^{\frac{m-h}{r}}[v]_{A_{p}(u)}\right)^{1+\frac{h}{r}}.
\]
If we denote $g=\chi_{G}$ then 
\[
\begin{split}uv(G) & \leq\sum_{Q\in\mathcal{S}}\frac{1}{|Q|}\int_{Q}|b-b_{Q}|^{h}fv\int_{Q}|b-b_{Q}|^{m-h}u\\
 & \leq\sum_{Q\in\mathcal{S}}\frac{u(Q)}{|Q|}\int_{Q}|b-b_{Q}|^{h}fv\frac{1}{u(Q)}\int_{Q}|b-b_{Q}|^{m-h}u\\
 & \leq[u]_{A_{1}}\sum_{Q\in\mathcal{S}}\frac{1}{uv(Q)}\int_{Q}|b-b_{Q}|^{h}fuv\frac{1}{u(Q)}\int_{Q}|b-b_{Q}|^{m-h}uuv(Q)\\
 & \leq[u]_{A_{1}}\sum_{Q\in\mathcal{S}}\left(\||b-b_{Q}|^{h}\|_{\exp L^{r/h}(uv),Q}\|f\|_{L(\log L)^{\frac{h}{r}}(uv),Q}\right.\\
 & \quad\left.\times\||b-b_{Q}|^{m-h}\|_{\exp L^{r/(m-h)}(uv),Q}\|g\|_{L(\log L)^{\frac{m-h}{r}}(u),Q}uv(Q)\right)\\
 & \leq c\|b\|_{Osc_{\exp L^{r}}}^{m}[uv]_{A_{\infty}}^{\frac{h}{r}}[u]_{A_{\infty}}^{\frac{m-h}{r}}\sum_{Q\in\mathcal{S}}\|f\|_{L(\log L)^{\frac{h}{r}}(uv),Q}\|g\|_{L(\log L)^{\frac{m-h}{r}}(u),Q}uv(Q)\\
 & =c[uv]_{A_{\infty}}^{\frac{h}{r}}[u]_{A_{\infty}}^{\frac{m-h}{r}}\sum_{Q\in\mathcal{S}}\|f\|_{L(\log L)^{\frac{h}{r}}(uv),Q}\|g\|_{L(\log L)^{\frac{m-h}{r}}(u),Q}uv(Q).
\end{split}
\]
We split the sparse family as follows $Q\in\mathcal{S}_{k,j}$, $k,j\geq0$
if
\[
\begin{split}2^{-j-1} & <\|f\|_{L(\log L)^{\frac{h}{r}}(uv),Q}\leq2^{-j},\\
2^{-k-1} & <\|g\|_{L(\log L)^{\frac{m-h}{r}}(u),Q}\leq2^{-k}.
\end{split}
\]
Then
\[
\sum_{Q\in\mathcal{S}}\|f\|_{L(\log L)^{\frac{h}{r}}(uv),Q}\|g\|_{L(\log L)^{\frac{m-h}{r}}(u),Q}uv(Q)=\sum_{k,j\geq0}s_{k,j}.
\]
Now we observe that 
\[
s_{k,j}\leq\begin{cases}
c_{n}[uv]_{A_{\infty}}2^{-k}j^{\frac{h}{r}},\\
c_{n,p,m}[uv]_{A_{\infty}}[v]_{A_{p}(u)}2^{-j}2^{k(p-1)}k^{\frac{m-h}{r}p}uv(G)uv\left(G\right).
\end{cases}
\]
For the top estimate we use Lemma \ref{Lem:DisEQ} with $w=uv$ and
$A(t)=\Phi_{\frac{h}{r}}(t)$, and we have that
\[
uv(Q)\|f\|_{L(\log L)^{\frac{h}{r}}(uv),Q}\leq cj^{\frac{h}{r}}\int_{\tilde{E_{Q}}}\Phi_{\frac{h}{r}}\left(|f|\right)uv.
\]
with
\[
\sum_{Q\in\mathcal{S}_{k,j}}\chi_{\tilde{E_{Q}}}(x)\leq\left\lceil c_{n}[uv]_{A_{\infty}}\right\rceil .
\]
Then
\[
\begin{split}s_{k,j} & \leq2^{-k}j^{\frac{h}{r}}\sum_{Q\in\mathcal{S}_{j,k}}\int_{\tilde{E_{Q}}}\Phi_{\frac{h}{r}}\left(|f|\right)uv.\\
 & \leq2\cdot2^{-k}j^{\frac{h}{r}}\sum_{Q\in\mathcal{S}_{j,k}}\int_{\tilde{E_{Q}}}\Phi_{\frac{h}{r}}\left(|f|\right)uv.\\
 & \leq c_{n}[uv]_{A_{\infty}}2^{-k}j^{\frac{h}{r}}\int_{\mathbb{R}^{n}}\Phi_{\frac{h}{r}}\left(|f|\right)uv.
\end{split}
\]
For the lower estimate, by Lemma \ref{Lem:UnQ}
\[
\begin{split}s_{k,j} & \leq2^{-j}2^{-k}\sum_{Q\in\mathcal{S}_{j,k}}uv(Q)\\
 & =c[uv]_{A_{\infty}}2^{-j}2^{-k}uv\left(\bigcup_{Q\in\mathcal{S}_{j,k}}Q\right)\\
 & \leq c_{n}[uv]_{A_{\infty}}2^{-j}2^{-k}uv\left(\left\{ x\in\mathbb{R}^{n}\,:\,M_{L(\log L)^{\frac{m-h}{r}}(u)}g>2^{-k-1}\right\} \right).
\end{split}
\]
Taking into account Lemma \ref{Lem:ContAvg}
\[
\|g\|_{L(\log L)^{\frac{m-h}{r}}(u)}\leq\|g\|_{[v]_{A_{p}(u)}L^{p}(\log L)^{p\frac{m-h}{r}}(uv).}
\]
That estimate combined with Lemma \ref{Lem:MUnDyad} allows us to
argue as follows
\[
\begin{split} & \leq c_{n}[uv]_{A_{\infty}}2^{-j}2^{-k}uv\left(\left\{ x\in\mathbb{R}^{n}\,:\,M_{[v]_{A_{p}(u)}L^{p}(\log L)^{p\frac{m-h}{r}}(uv)}g>2^{-k-1}\right\} \right)\\
 & \leq c_{n,p}[uv]_{A_{\infty}}2^{-j}2^{-k}\int_{\mathbb{R}^{d}}[v]_{A_{p}(u)}\Phi_{\frac{m-h}{r}}\left(2^{k+1}g\right)^{p}.\\
 & \leq c_{n,p}[uv]_{A_{\infty}}[v]_{A_{p}(u)}2^{-j}2^{-k}\Phi_{\frac{m-h}{r}}\left(2^{k+1}\right)^{p}uv(G).\\
 & \leq c_{n,p}[uv]_{A_{\infty}}[v]_{A_{p}(u)}2^{-j}2^{-k}2^{kp+p}\log\left(e+2^{k+1}\right)^{\frac{m-h}{r}p}uv(G).\\
 & \leq c_{n,p,m}[uv]_{A_{\infty}}[v]_{A_{p}(u)}2^{-j}2^{k(p-1)}k^{\frac{m-h}{r}p}uv(G).
\end{split}
\]
Combining the estimates above
\[
\begin{split}uv(G) & \leq c[uv]_{A_{\infty}}^{\frac{h}{r}}[u]_{A_{\infty}}^{\frac{m-h}{r}}[u]_{A_{1}}\sum_{k=0}^{\infty}\sum_{j=0}^{\infty}s_{k,j}\\
 & \leq\sum_{k=0}^{\infty}\sum_{j=0}^{\infty}\min\left\{ cc_{n,p,m}[uv]_{A_{\infty}}^{1+\frac{h}{r}}[u]_{A_{\infty}}^{\frac{m-h}{r}}[u]_{A_{1}}[v]_{A_{p}(u)}2^{-j}2^{k(p-1)}k^{\frac{m-h}{r}p}uv(G),\right.\\
 & \left.cc_{n}[uv]_{A_{\infty}}^{1+\frac{h}{r}}[u]_{A_{\infty}}^{\frac{m-h}{r}}[u]_{A_{1}}2^{-k}j^{\frac{h}{r}}\int_{\mathbb{R}^{n}}\Phi_{\frac{h}{r}}\left(|f|\right)uv\right\} .
\end{split}
\]
We end up the proof applying Lemma \ref{Lem:DoubleSum}, with $\gamma_{1}=cc_{n}[u]_{A_{1}}[uv]_{A_{\infty}}^{1+\frac{h}{r}}[u]_{A_{\infty}}^{\frac{m-h}{r}}\int_{\mathbb{R}^{n}}\Phi_{\frac{h}{r}}\left(|f|\right)uv$,
$\gamma_{2}=cc_{n,p,m}[u]_{A_{1}}[uv]_{A_{\infty}}^{1+\frac{h}{r}}[u]_{A_{\infty}}^{\frac{m-h}{r}}[v]_{A_{p}(u)}$,
$\beta=uv\left(G\right)$, $\delta=p-1$, $\gamma=1$, $\rho_{1}=\frac{h}{r}$
and $\rho_{2}=\frac{m-h}{r}p$.

\subsubsection{Rough singular integrals}

Let us fix a dyadic lattice $\mathcal{D}$ and let $\mathcal{D}_{j}$
$j=1,\dots,3^{n}$ obtained using the $3^{n}$ dyadic lattices trick
(Lemma \ref{Lem:3ndlt}). Now let
\[
G=\left\{ \frac{T_{\Omega}(fv)(x)}{v(x)}>1\right\} \setminus\left\{ M_{uv}^{\mathcal{F}}(f)>\frac{1}{2}\right\} 
\]
where $\mathcal{F=\bigcup}_{j=1}^{3^{n}}\mathcal{D}_{j}$ and assume
that $\|f\|_{L^{1}(uv)}=1$. 

Then it suffices to prove that 
\[
uv(G)\leq c_{n,p}[uv]_{A_{\infty}}[u]_{A_{\infty}}[u]_{A_{1}}\log\left(e+[uv]_{A_{\infty}}[u]_{A_{\infty}}[u]_{A_{1}}[v]_{A_{p}(u)}\right)+\frac{1}{2}uv(G).
\]
Note that 
\[
uv(G)\leq\left|\int_{\mathbb{R}^{n}}\frac{T_{\Omega}(fv)}{v}uvg\right|=\left|\int_{\mathbb{R}^{n}}T_{\Omega}(fv)ug\right|
\]
where $g\simeq\chi_{G}$. Then for $s=1+\frac{1}{2\tau_{n}[u]_{A_{\infty}}}$,
notice that, arguing as in \cite{LiPRR}
\[
\frac{u^{s}(G\cap Q)}{u^{s}(Q)}\lesssim\left(\frac{u(G\cap Q)}{u(Q)}\right)^{\frac{1}{2}}.
\]
Taking into account \eqref{eq:Rough} and Remark \ref{Rem:3nRough}
we have that
\[
\begin{split}uv(G) & \lesssim c_{n}c_{T}s'\sum_{j=1}^{3^{n}}\sum_{Q\in\mathcal{S}_{j}}\langle fv\rangle_{Q,1}\langle\chi_{G}u\rangle_{Q,s}\\
 & =c_{n}c_{T}s'\sum_{j=1}^{3^{n}}\sum_{Q\in\mathcal{S}_{j}}\langle fv\rangle_{Q,1}\langle\chi_{G}\rangle_{Q,s}^{u^{s}}\langle u\rangle_{Q,s}\\
 & \leq c_{n}c_{T}[u]_{A_{\infty}}[u]_{A_{1}}\sum_{j=1}^{3^{n}}\sum_{Q\in\mathcal{S}_{j}}\langle f\rangle_{Q,1}^{uv}\langle g\rangle_{Q,2s}^{u}uv(Q)
\end{split}
\]
where $g=\chi_{G}$ and each $\mathcal{S}_{j}\subset\mathcal{D}_{j}$. 

At this point one remark is in order. Notice that in this case, since
we don't have pointwise domination, we need to remove the cubes where
the maximal function is large from the sparse family using just one
maximal function. On the other hand if we choose the standard maximal
function instead of some dyadic version that would lead to some dependence
on the doubling constant of the measure $uvdx$, which is something
that we avoid with our choice (see Lemma \ref{Lem:MUnDyad}). 

After that remark we continue with the proof. Notice that it suffices
to prove that for each $j$, 
\[
\begin{split} & c_{n}c_{T}[u]_{A_{\infty}}[u]_{A_{1}}\sum_{Q\in\mathcal{S}_{j}}\langle f\rangle_{Q,1}^{uv}\langle g\rangle_{Q,2s}^{u}uv(Q)\\
 & \leq c_{n,p}[uv]_{A_{\infty}}[u]_{A_{\infty}}[u]_{A_{1}}\log\left(e+[uv]_{A_{\infty}}[u]_{A_{\infty}}[u]_{A_{1}}[v]_{A_{p}(u)}\right)+\frac{1}{2\cdot3^{n}}uv(G).
\end{split}
\]
Taking into account the definition of $G$, since we remove the set
where $M_{uv}^{\mathcal{F}}(f)>\frac{1}{2}$ we can split sparse family
as follows $Q\in\mathcal{S}_{k,j}$, $k,j\geq0$ if
\[
\begin{split}2^{-j-1} & <\langle f\rangle_{Q,1}^{uv}\leq2^{-j}\\
2^{-k-1} & <\langle g\rangle_{Q,2s}^{u}\leq2^{-k}.
\end{split}
\]
Let us call
\[
s_{k,j}=\sum_{Q\in\mathcal{S}_{k,j}}\langle f\rangle_{Q,1}^{uv}\langle g\rangle_{Q,2s}^{u}uv(Q).
\]
Now we observe that 
\[
s_{k,j}\leq\begin{cases}
c_{n}2^{-k}[uv]_{A_{\infty}}\\
c_{n,p}[uv]_{A_{\infty}}[v]_{A_{p}(u)}2^{-j}2^{(2ps-1)k}uv\left(G\right).
\end{cases}
\]
For the top estimate we argue as we did in \eqref{eq:TopEst}.  For
the lower estimate, using Lemma \ref{Lem:UnQ}, 
\[
\begin{split}s_{k,j} & \leq2^{-j}2^{-k}\sum_{Q\in\mathcal{S}_{j,k}}uv(Q)\\
 & \leq c_{n}[uv]_{A_{\infty}}2^{-j}2^{-k}uv\left(\bigcup_{Q\in\mathcal{S}_{j,k}}Q\right)\\
 & \leq c_{n}[uv]_{A_{\infty}}2^{-j}2^{-k}uv\left(\left\{ x\in\mathbb{R}^{n}\,:\,(M_{u}g)^{\frac{1}{2s}}>2^{-k-1}\right\} \right).
\end{split}
\]
Since $v\in A_{p}(u)$, taking into account Lemmas \ref{Lem:ContAvg}
and \ref{Lem:MUnDyad}
\[
\begin{split} & \leq c_{n}[uv]_{A_{\infty}}2^{-j}2^{-k}uv\left(\left\{ x\in\mathbb{R}^{n}\,:\,\left(M_{u}g\right)^{\frac{1}{2s}}>2^{-k-1}\right\} \right)\\
 & \leq c_{n}[uv]_{A_{\infty}}2^{-j}2^{-k}uv\left(\left\{ x\in\mathbb{R}^{n}\,:\,\left([v]_{A_{p}(u)}M_{uv}g\right)^{\frac{1}{2sp}}>2^{-k-1}\right\} \right)\\
 & \leq c_{n}[uv]_{A_{\infty}}2^{-j}2^{-k}uv\left(\left\{ x\in\mathbb{R}^{n}\,:\,M_{uv}g>2^{-2spk-2sp}[v]_{A_{p}(u)}^{-1}\right\} \right)\\
 & \leq c_{n,p}[uv]_{A_{\infty}}[v]_{A_{p}(u)}2^{-j}2^{(2ps-1)k}uv\left(G\right).
\end{split}
\]
Combining the estimates above
\[
\begin{split}uv(G) & \leq c_{n}c_{T}\sum_{k=0}^{\infty}\sum_{j=0}^{\infty}s_{k,j}\\
 & \leq\sum_{k=0}^{\infty}\sum_{j=0}^{\infty}\min\left\{ c_{n}c_{T}\beta_{u,v}2^{-k},c_{n,p}\beta_{u,v}[v]_{A_{p}(u)}2^{-j}2^{(2ps-1)k}uv\left(G\right)\right\} 
\end{split}
\]
where $\beta_{u,v}=[uv]_{A_{\infty}}[u]_{A_{\infty}}[u]_{A_{1}}$.
We end the proof using Lemma \ref{Lem:DoubleSum}, with $\gamma_{1}=c_{n}c_{T}\beta_{u,v}$,
$\gamma_{2}=c_{n,p}\beta_{u,v}[v]_{A_{p}(u)}$, $\beta=uv\left(G\right)$,
$\delta=2ps-1$, $\gamma=3^{n}$ and $\rho_{1}=\rho_{2}=0$.

\subsection{Proof of Theorem \ref{Thm:2}}

\subsubsection{Calderón-Zygmund operators}

Using pointwise sparse domination it suffices to settle the result
for a sparse operator $A_{\mathcal{S}}$ where $\mathcal{S}$ is a
$\frac{8}{9}$-sparse family.

Let $G=\{\frac{A_{\mathcal{S}}(fv)(x)}{v(x)}>1\}\setminus\{M_{v}^{\mathcal{D}}(f)>\frac{1}{2}\}$
and assume that $f\geq0$ and $\|f\|_{L^{1}(uv)}=1$. Then it suffices
to prove that 
\[
uv(G)\leq c_{n,p}[v]_{A_{1}}[v]_{A_{\infty}}[u]_{A_{1}(v)}\log\left(e+[uv]_{A_{\infty}}[v]_{A_{1}}[u]_{A_{1}(v)}\right)+\frac{1}{2}uv(G).
\]
If we denote $g=\chi_{G}$ then 
\[
\]
\[
\begin{split}uv(G) & \lesssim c_{n}c_{T}\sum_{Q\in\mathcal{S}}\langle fv\rangle_{Q,1}\int_{Q}gu\\
 & =c_{n}c_{T}\sum_{Q\in\mathcal{S}_{j}}\langle f\rangle_{Q,1}^{v}\frac{v(Q)}{|Q|}\int_{Q}gu\\
 & \leq[v]_{A_{1}}c_{n}c_{T}\sum_{Q\in\mathcal{S}_{j}}\langle f\rangle_{Q,1}^{v}\int_{Q}guv.\\
 & =c_{n}c_{T}[v]_{A_{1}}\sum_{Q\in\mathcal{S}_{j}}\langle f\rangle_{Q,1}^{v}\langle g\rangle_{Q,1}^{uv}uv(Q)
\end{split}
\]
and it suffices to prove that
\[
\begin{split} & c_{n}c_{T}[v]_{A_{1}}\sum_{Q\in\mathcal{S}}\langle f\rangle_{Q,1}^{v}\langle g\rangle_{Q,1}^{uv}uv(Q)\\
 & \leq c_{n,p}[v]_{A_{1}}[v]_{A_{\infty}}[u]_{A_{1}(v)}\log\left(e+[uv]_{A_{\infty}}[v]_{A_{1}}[u]_{A_{1}(v)}\right)+\frac{1}{2}uv(G).
\end{split}
\]
We split the sparse family as follows. Let $Q\in\mathcal{S}_{k,j}$,
$k,j\geq0$ if
\[
\begin{split}2^{-j-1} & <\langle f\rangle_{Q,1}^{v}\leq2^{-j}\\
2^{-k-1} & <\langle g\rangle_{Q,1}^{uv}\leq2^{-k}
\end{split}
\]
Let us call 
\[
s_{k,j}=\sum_{Q\in\mathcal{S}_{k,j}}\langle f\rangle_{Q,1}^{v}\langle g\rangle_{Q,1}^{uv}uv(Q).
\]
Now we observe that 
\[
s_{k,j}\leq\begin{cases}
c_{n}2^{-k}[u]_{A_{1}(v)}[v]_{A_{\infty}}\\
c_{n}[uv]_{A_{\infty}}2^{-j}2^{-k}uv(G).
\end{cases}
\]
For the top estimate we argue as follows. Using Lemma \ref{Lem:DisEQ}
we have that

\[
\int_{Q}fv\leq4\int_{\tilde{E_{Q}}}fv
\]
where $\tilde{E}_{Q}\subset Q$ and
\[
\sum_{Q\in\mathcal{S}_{k,j}}\chi_{\tilde{E_{Q}}}(x)\leq\left\lceil c_{n}[v]_{A_{\infty}}\right\rceil .
\]
Then
\begin{equation}
\begin{split}s_{k,j} & \leq2^{-k}\sum_{Q\in\mathcal{S}_{j,k}}\frac{uv(Q)}{v(Q)}\int_{Q}fv\\
 & \leq2\cdot2^{-k}\sum_{Q\in\mathcal{S}_{j,k}}\frac{uv(Q)}{v(Q)}\int_{\tilde{E_{Q}}}fv\\
 & \leq2\cdot2^{-k}[u]_{A_{1}(v)}\sum_{Q\in\mathcal{S}_{j,k}}\int_{\tilde{E_{Q}}}fuv\\
 & \leq c_{n}2^{-k}[u]_{A_{1}(v)}[v]_{A_{\infty}}\int_{\mathbb{R}^{n}}fuv.\\
 & =c_{n}2^{-k}[u]_{A_{1}(v)}[v]_{A_{\infty}}.
\end{split}
\label{eq:TopThm2}
\end{equation}
For the lower estimate, using Lemma \ref{Lem:UnQ},
\[
\begin{split}s_{k,j} & \leq2^{-j}2^{-k}\sum_{Q\in\mathcal{S}_{j,k}}uv(Q)\\
 & \leq c_{n}[uv]_{A_{\infty}}2^{-j}2^{-k}uv\left(\bigcup_{Q\in\mathcal{S}_{j,k}}Q\right)\\
 & \leq c_{n}[uv]_{A_{\infty}}2^{-j}2^{-k}uv\left(\left\{ x\in\mathbb{R}^{n}\,:\,M_{uv}^{\mathcal{D}}(g)>2^{-k-1}\right\} \right).
\end{split}
\]
Now using the weak-type $(1,1)$ of $M_{uv}$ (Lemma \ref{Lem:MUnDyad})
\[
\begin{split} & \leq c_{n}[uv]_{A_{\infty}}2^{-j}2^{-k}uv\left(\left\{ x\in\mathbb{R}^{n}\,:\,M_{uv}^{\mathcal{D}}(g)>2^{-k-1}\right\} \right)\\
 & \leq c_{n}[uv]_{A_{\infty}}2^{-j}2^{-k}uv(G).
\end{split}
\]
Combining the estimates above,
\[
\begin{split}uv(G) & \leq c_{n}c_{T}[v]_{A_{1}}\sum_{k=0}^{\infty}\sum_{j=0}^{\infty}s_{k,j}\\
 & \leq\sum_{k=0}^{\infty}\sum_{j=0}^{\infty}\min\left\{ c_{n}c_{T}2^{-k}[v]_{A_{1}}[v]_{A_{\infty}}[u]_{A_{1}(v)},c_{n}[v]_{A_{1}}[uv]_{A_{\infty}}2^{-j}2^{-k}uv\left(G\right)\right\} .
\end{split}
\]
An application of Lemma \ref{Lem:DoubleSum} with 
\[
\gamma_{1}=c_{n}c_{T}[v]_{A_{1}}[v]_{A_{\infty}}[u]_{A_{1}(v)}\qquad\gamma_{2}=c_{n}[v]_{A_{1}}[uv]_{A_{\infty}}
\]
$\delta=0$, $\beta=uv(G)$, $\gamma=1$ and $\rho_{1}=\rho_{2}=0$
ends the proof.

\subsubsection{Commutators}

Using pointwise sparse domination it suffices to settle the result
for suitable dyadic operators. Let 
\[
G=\left\{ \frac{\sum_{Q\in\mathcal{S}}|b(x)-b_{Q}|^{m-h}\chi_{Q}(x)\frac{1}{|Q|}\int_{Q}|b-b_{Q}|^{h}fv}{v(x)}>1\right\} \setminus\left\{ M_{L(\log L)^{\frac{h}{r}}(v)}^{\mathcal{D}}(f)>\frac{1}{2}\right\} 
\]
Assume that $\|b\|_{Osc_{\exp L^{r}}}=1$. It suffices to prove that
\[
uv(G)\leq c\varphi_{m,h}(u,v)\int_{\mathbb{R}^{n}}\Phi_{\frac{h}{r}}(|f(x)|)dx+\frac{1}{2}uv(G).
\]
where$\gamma_{1}=cc_{n}[u]_{A_{1}}[uv]_{A_{\infty}}^{1+\frac{h}{r}}[u]_{A_{\infty}}^{\frac{m-h}{r}}\int_{\mathbb{R}^{n}}\Phi_{\frac{h}{r}}\left(|f|\right)uv$,
$\gamma_{2}=cc_{n,p,m}[u]_{A_{1}}[uv]_{A_{\infty}}^{1+\frac{h}{r}}[u]_{A_{\infty}}^{\frac{m-h}{r}}[v]_{A_{p}(u)}$,
\[
\varphi_{m,h}(u,v)=[u]_{A_{1}}[uv]_{A_{\infty}}^{1+\frac{h}{r}}[u]_{A_{\infty}}^{\frac{m-h}{r}}\log\left(e+[u]_{A_{1}}[uv]_{A_{\infty}}^{1+\frac{h}{r}}[u]_{A_{\infty}}^{\frac{m-h}{r}}[v]_{A_{p}(u)}\right)^{1+\frac{h}{r}}
\]
If we denote $g=\chi_{G}$ then 
\[
\begin{split}uv(G) & \leq\sum_{Q\in\mathcal{S}}\frac{1}{|Q|}\int_{Q}|b-b_{Q}|^{h}fv\int_{Q}|b-b_{Q}|^{m-h}u\\
 & \leq\sum_{Q\in\mathcal{S}}\frac{1}{v(Q)}\int_{Q}|b-b_{Q}|^{h}fv\frac{v(Q)}{|Q|}\int_{Q}|b-b_{Q}|^{m-h}u\\
 & \leq[v]_{A_{1}}\sum_{Q\in\mathcal{S}}\frac{1}{v(Q)}\int_{Q}|b-b_{Q}|^{h}fv\frac{1}{uv(Q)}\int_{Q}|b-b_{Q}|^{m-h}uvuv(Q)\\
 & \leq[v]_{A_{1}}\sum_{Q\in\mathcal{S}}\left(\||b-b_{Q}|^{h}\|_{\exp L^{r/h}(v),Q}\|f\|_{L(\log L)^{\frac{h}{r}}(v),Q}\right.\\
 & \quad\times\left.\||b-b_{Q}|^{m-h}\|_{\exp L^{r/(m-h)}(uv),Q}\|g\|_{L(\log L)^{\frac{m-h}{r}}(uv),Q}uv(Q)\right)\\
 & \leq c\|b\|_{Osc_{\exp L^{r}}}^{m}[v]_{A_{1}}[v]_{A_{\infty}}^{\frac{h}{r}}[uv]_{A_{\infty}}^{\frac{m-h}{r}}\sum_{Q\in\mathcal{S}}\|f\|_{L(\log L)^{\frac{h}{r}}(v),Q}\|g\|_{L(\log L)^{\frac{m-h}{r}}(uv),Q}uv(Q)\\
 & =c[v]_{A_{1}}[v]_{A_{\infty}}^{\frac{h}{r}}[uv]_{A_{\infty}}^{\frac{m-h}{r}}\sum_{Q\in\mathcal{S}}\|f\|_{L(\log L)^{\frac{h}{r}}(v),Q}\|g\|_{L(\log L)^{\frac{m-h}{r}}(uv),Q}uv(Q)
\end{split}
\]
Let us split the sparse family as follows. Let $Q\in\mathcal{S}_{k,j}$,
$k,j\geq0$ if
\[
\begin{split}2^{-j-1} & <\|f\|_{L(\log L)^{\frac{h}{r}}(v),Q}\leq2^{-j}\\
2^{-k-1} & <\|g\|_{L(\log L)^{\frac{m-h}{r}}(uv),Q}\leq2^{-k}.
\end{split}
\]
Let us call
\[
s_{k,j}=\sum_{Q\in\mathcal{S}_{k,}j}\|f\|_{L(\log L)^{\frac{h}{r}}(v),Q}\|g\|_{L(\log L)^{\frac{m-h}{r}}(uv),Q}uv(Q).
\]
Now we observe that 
\[
s_{k,j}\leq\begin{cases}
c_{n}[u]_{A_{1}(v)}[v]_{A_{\infty}}2^{-k}j^{\frac{h}{r}}\int_{\mathbb{R}^{n}}\Phi_{\frac{h}{r}}\left(|f|\right)uv\\
c_{n,p,m}c[uv]_{A_{\infty}}2^{-j}2^{k(p-1)}k^{\frac{m-h}{r}p}uv(G).
\end{cases}
\]
For the top estimate we use Lemma \ref{Lem:DisEQ} with $w=v$ and
$A(t)=\Phi_{\frac{h}{r}}\left(t\right)$, and we have that
\[
v(Q)\|f\|_{L(\log L)^{\frac{h}{r}}(v),Q}\leq cj^{\frac{h}{r}}\int_{\tilde{E_{Q}}}\Phi_{\frac{h}{r}}\left(|f|\right)uv.
\]
with
\[
\sum_{Q\in\mathcal{S}_{k,j}}\chi_{\tilde{E_{Q}}}(x)\leq\left\lceil c_{n}[v]_{A_{\infty}}\right\rceil .
\]
Then
\[
\begin{split}s_{k,j} & \leq2^{-k}j^{\frac{h}{r}}\sum_{Q\in\mathcal{S}_{j,k}}\frac{uv(Q)}{v(Q)}\int_{\tilde{E_{Q}}}\Phi_{\frac{h}{r}}\left(|f|\right)v.\\
 & \leq2[u]_{A_{1}(v)}2^{-k}j^{\frac{h}{r}}\sum_{Q\in\mathcal{S}_{j,k}}\int_{\tilde{E_{Q}}}\Phi_{\frac{h}{r}}\left(|f|\right)uv.\\
 & \leq c_{n}[u]_{A_{1}(v)}[v]_{A_{\infty}}2^{-k}j^{\frac{h}{r}}\int_{\mathbb{R}^{n}}\Phi_{\frac{h}{r}}\left(|f|\right)uv.
\end{split}
\]
For the lower estimate, by Lemma \ref{Lem:MUnDyad}
\[
\begin{split}s_{k,j} & \leq2^{-j}2^{-k}\sum_{Q\in\mathcal{S}_{j,k}}uv(Q)\\
 & =c[uv]_{A_{\infty}}2^{-j}2^{-k}uv\left(\bigcup_{Q\in\mathcal{S}_{j,k}}Q\right)\\
 & \leq c_{n}[uv]_{A_{\infty}}2^{-j}2^{-k}uv\left(\left\{ x\in\mathbb{R}^{n}\,:\,M_{L(\log L)^{\frac{m-h}{r}}(uv)}^{\mathcal{D}}g>2^{-k-1}\right\} \right)\\
 & \leq c_{n,p}[uv]_{A_{\infty}}2^{-j}2^{-k}\Phi_{\frac{m-h}{r}}\left(2^{k+1}\right)^{p}uv(G).\\
 & \leq c_{n,p}[uv]_{A_{\infty}}2^{-j}2^{-k}2^{kp+p}\log\left(e+2^{k+1}\right)^{\frac{m-h}{r}p}uv(G).\\
 & \leq c_{n,p,m}[uv]_{A_{\infty}}2^{-j}2^{k(p-1)}k^{\frac{m-h}{r}p}uv(G).
\end{split}
\]
Combining the estimates above

\[
\begin{split}uv(G) & \leq c[v]_{A_{1}}[v]_{A_{\infty}}^{\frac{h}{r}}[uv]_{A_{\infty}}^{\frac{m-h}{r}}\sum_{k=0}^{\infty}\sum_{j=0}^{\infty}s_{k,j}\\
 & \leq\sum_{k=0}^{\infty}\sum_{j=0}^{\infty}\min\left\{ cc_{n,p,m}[v]_{A_{1}}[v]_{A_{\infty}}^{\frac{h}{r}}[uv]_{A_{\infty}}^{\frac{m-h}{r}}2^{-j}2^{k(p-1)}k^{\frac{m-h}{r}p}uv(G),\right.\\
 & \qquad\qquad\quad\left.cc_{n}[v]_{A_{1}}[v]_{A_{\infty}}^{\frac{h}{r}}[uv]_{A_{\infty}}^{\frac{m-h}{r}}[u]_{A_{1}(v)}[v]_{A_{\infty}}2^{-k}j^{\frac{h}{r}}\int_{\mathbb{R}^{n}}\Phi_{\frac{h}{r}}\left(|f|\right)uv\right\} 
\end{split}
\]
We end up the proof applying Lemma \ref{Lem:DoubleSum}, with 
\[
\begin{split}\gamma_{1} & =cc_{n}[v]_{A_{1}}[v]_{A_{\infty}}^{\frac{h}{r}}[uv]_{A_{\infty}}^{\frac{m-h}{r}}[u]_{A_{1}(v)}[v]_{A_{\infty}}2^{-k}j^{\frac{h}{r}}\int_{\mathbb{R}^{n}}\Phi_{\frac{h}{r}}\left(|f|\right)uv,\\
\gamma_{2} & =cc_{n,p,m}[v]_{A_{1}}[v]_{A_{\infty}}^{\frac{h}{r}}[uv]_{A_{\infty}}^{\frac{m-h}{r}}2^{-j}2^{k(p-1)}k^{\frac{m-h}{r}p},
\end{split}
\]
$\beta=uv\left(G\right)$, $\delta=p-1$, $\gamma=1$, $\rho_{1}=\frac{h}{r}$
and $\rho_{2}=\frac{m-h}{r}p$.

\subsubsection{Rough singular integrals}

Let us fix a dyadic lattice $\mathcal{D}$ and let $\mathcal{D}_{j}$
$j=1,\dots,3^{n}$ obtained using the $3^{n}$ dyadic lattices trick.
Now let
\[
G=\left\{ \frac{T_{\Omega}(fv)(x)}{v(x)}>1\right\} \setminus\left\{ M_{uv}^{\mathcal{F}}(f)>\frac{1}{2}\right\} 
\]
where $\mathcal{F=\bigcup}_{j=1}^{3^{n}}\mathcal{D}_{j}$ and assume
that $\|f\|_{L^{1}(uv)}=1$. Then it suffices to prove that 
\[
uv(G)\leq c_{n,p}[uv]_{A_{\infty}}[v]_{A_{1}}[u]_{A_{1}(v)}[v]_{A_{\infty}}\log\left(e+[uv]_{A_{\infty}}[v]_{A_{1}}\right)+\frac{1}{2}uv(G).
\]
Note that 
\[
uv(G)\leq\left|\int_{\mathbb{R}^{n}}\frac{T_{\Omega}(fv)}{v}uvg\right|=\left|\int_{\mathbb{R}^{n}}T_{\Omega}(fv)ug\right|
\]
where $g\simeq\chi_{G}$. Then for $s=1+\frac{1}{2\tau_{n}[uv]_{A_{\infty}}}$,
notice that, arguing as in \cite{LiPRR}
\[
\frac{(uv)^{s}(G\cap Q)}{(uv)^{s}(Q)}\lesssim\left(\frac{uv(G\cap Q)}{uv(Q)}\right)^{\frac{1}{2}}
\]
Taking that into account we have that
\[
\begin{split}uv(G) & \lesssim c_{n}c_{T}[uv]_{A_{\infty}}\sum_{j=1}^{3^{n}}\sum_{Q\in\mathcal{S}_{j}}\langle fv\rangle_{Q,1}\langle\chi_{G}u\rangle_{Q,s}|Q|\\
 & \leq c_{n}c_{T}[uv]_{A_{\infty}}\sum_{j=1}^{3^{n}}\sum_{Q\in\mathcal{S}_{j}}\langle f\rangle_{Q,1}^{v}\frac{v(Q)}{|Q|}\langle\chi_{G}u\rangle_{Q,s}|Q|\\
 & \leq c_{n}c_{T}[uv]_{A_{\infty}}[v]_{A_{1}}\sum_{j=1}^{3^{n}}\sum_{Q\in\mathcal{S}_{j}}\langle f\rangle_{Q,1}^{v}\langle\chi_{G}uv\rangle_{Q,s}|Q|\\
 & \leq c_{n}c_{T}[uv]_{A_{\infty}}[v]_{A_{1}}\sum_{j=1}^{3^{n}}\sum_{Q\in\mathcal{S}_{j}}\langle f\rangle_{Q,1}^{v}\langle\chi_{G}\rangle_{Q,s}^{(uv)^{s}}\left(\frac{(uv)^{s}(Q)}{|Q|}\right)^{\frac{1}{s}}\\
 & \leq c_{n}c_{T}[uv]_{A_{\infty}}[v]_{A_{1}}\sum_{j=1}^{3^{n}}\sum_{Q\in\mathcal{S}_{j}}\langle f\rangle_{Q,1}^{v}\left(\langle g\rangle_{Q,1}^{uv}\right)^{\frac{1}{2}}\left(\frac{(uv)^{s}(Q)}{|Q|}\right)^{\frac{1}{s}}|Q|\\
 & \leq c_{n}c_{T}[uv]_{A_{\infty}}[v]_{A_{1}}\sum_{j=1}^{3^{n}}\sum_{Q\in\mathcal{S}_{j}}\langle f\rangle_{Q,1}^{v}\left(\langle g\rangle_{Q,1}^{uv}\right)^{\frac{1}{2}}uv(Q).
\end{split}
\]
Hence it suffices to prove that for every sparse family $\mathcal{S}$,
\[
\begin{split} & c_{n}c_{T}[uv]_{A_{\infty}}[v]_{A_{1}}\sum_{Q\in\mathcal{S}}\langle f\rangle_{Q,1}^{v}\left(\langle g\rangle_{Q,1}^{uv}\right)^{\frac{1}{2}}uv(Q)\\
 & \leq c_{n,p}[uv]_{A_{\infty}}[v]_{A_{1}}[u]_{A_{1}(v)}[v]_{A_{\infty},}\log\left(e+[uv]_{A_{\infty}}[v]_{A_{1}}\right)+\frac{1}{2}uv(G).
\end{split}
\]
We split the sparse family as follows $Q\in\mathcal{S}_{k,j}$, $k,j\geq0$
if
\[
\begin{split}2^{-j-1} & <\langle f\rangle_{Q,1}^{v}\leq2^{-j}\\
2^{-k-1} & <\langle g\rangle_{Q,1}^{uv}\leq2^{-k}
\end{split}
\]
Let us call
\[
s_{k,j}=\sum_{Q\in\mathcal{S}_{k,j}}\langle f\rangle_{Q,1}^{v}\left(\langle g\rangle_{Q,1}^{uv}\right)^{\frac{1}{2}}uv(Q)
\]
Now we observe that 
\[
s_{k,j}\leq\begin{cases}
c_{n}2^{-k}[u]_{A_{1}(v)}[v]_{A_{\infty}}\\
c_{n}[uv]_{A_{\infty}}2^{-j}2^{k}uv\left(G\right).
\end{cases}
\]
For the top estimate we argue as we did to get \eqref{eq:TopThm2}.
 For the lower estimate, using Lemma \ref{Lem:UnQ}, 
\[
\begin{split}s_{k,j} & \leq2^{-j}2^{-k}\sum_{Q\in\mathcal{S}_{j,k}}uv(Q)\\
 & =c[uv]_{A_{\infty}}2^{-j}2^{-k}uv\left(\bigcup_{Q\in\mathcal{S}_{j,k}}Q\right)\\
 & \leq c_{n}[uv]_{A_{\infty}}2^{-j}2^{-k}uv\left(\left\{ x\in\mathbb{R}^{n}\,:\,(M_{uv}^{\mathcal{D}}g)^{\frac{1}{2}}>2^{-k-1}\right\} \right).
\end{split}
\]
Since $v\in A_{p}(u)$, taking into account Lemmas \ref{Lem:ContAvg}
and \ref{Lem:MUnDyad}, 
\[
\begin{split} & \leq c_{n}[uv]_{A_{\infty}}2^{-j}2^{-k}uv\left(\left\{ x\in\mathbb{R}^{n}\,:\,M_{uv}^{\mathcal{D}}g>2^{-2(k+1)}\right\} \right)\\
 & \leq c_{n}[uv]_{A_{\infty}}2^{-j}2^{-k}2^{2(k+1)}uv\left(G\right)\\
 & \leq c_{n}[uv]_{A_{\infty}}2^{-j}2^{k}uv\left(G\right).
\end{split}
\]
Combining the estimates above
\[
\begin{split}uv(G) & \leq c_{n}c_{T}\sum_{k=0}^{\infty}\sum_{j=0}^{\infty}s_{k,j}\\
 & \leq\sum_{k=0}^{\infty}\sum_{j=0}^{\infty}\min\left\{ c_{n}c_{T}[uv]_{A_{\infty}}[v]_{A_{1}}[u]_{A_{1}(v)}[v]_{A_{\infty}}2^{-k},c_{n}[uv]_{A_{\infty}}^{2}[v]_{A_{1}}2^{-j}2^{k}uv\left(G\right)\right\} .
\end{split}
\]
We end the proof using Lemma \ref{Lem:DoubleSum}, with 
\[
\gamma_{1}=c_{n}c_{T}[uv]_{A_{\infty}}[v]_{A_{1}}[u]_{A_{1}(v)}[v]_{A_{\infty},}\qquad\gamma_{2}=c_{n}[uv]_{A_{\infty}}^{2}[v]_{A_{1}},
\]
$\beta=uv\left(G\right)$, $\delta=1$ and $\gamma=3^{n}.$

\section*{Acknowledgment}

The authors would like to thank Sheldy Ombrosi for his comments on
an earlier version of this manuscript and for some enlightening discussions
on this topic. 

\bibliographystyle{plain}
\bibliography{referencias}

\end{document}